\documentclass[twoside,a4paper,reqno,11pt]{amsart} 
\usepackage[top=28mm,right=28mm,bottom=28mm,left=28mm]{geometry}
\headheight=14pt
\parskip 1mm

\usepackage{microtype}
\frenchspacing

\usepackage{amsfonts, amsmath, amssymb, amsbsy, mathrsfs, bm, latexsym, stmaryrd, hyperref, array, enumitem, textcomp, url}
\usepackage[table]{xcolor}

\renewcommand{\a}{\alpha}

\newcommand{\e}{\epsilon}
\renewcommand{\l}{\lambda}

\renewcommand{\O}{\Omega}

\newcommand{\la}{\langle}
\newcommand{\ra}{\rangle}
\newcommand{\leqs}{\leqslant}
\newcommand{\geqs}{\geqslant}

\newcommand{\vs}{\vspace{3mm}}

\makeatletter
\newcommand{\imod}[1]{\allowbreak\mkern4mu({\operator@font mod}\,\,#1)}
\makeatother

\theoremstyle{plain}
\newtheorem{theorem}{Theorem}

\newtheorem{thm}{Theorem}[section] 
\newtheorem{lem}[thm]{Lemma}
\newtheorem{prop}[thm]{Proposition}

\newtheorem*{theorem*}{Theorem} 
\newtheorem*{conj*}{Conjecture}

\theoremstyle{definition}
\newtheorem{rem}[thm]{Remark}

\begin{document}

\title[Extremely primitive affine groups]{On the classification of extremely primitive \\ affine groups}

\author{Timothy C. Burness}
\address{T.C. Burness, School of Mathematics, University of Bristol, Bristol BS8 1UG, UK}
\email{t.burness@bristol.ac.uk}

\author{Melissa Lee}
\address{M. Lee, Department of Mathematics, University of Auckland, Auckland, New Zealand}
\email{melissa.lee@auckland.ac.nz}

\date{\today} 

\begin{abstract}
Let $G$ be a finite non-regular primitive permutation group on a set $\Omega$ with point stabiliser $G_{\a}$. Then $G$ is said to be extremely primitive if $G_{\a}$ acts primitively on each of its orbits in $\Omega \setminus \{\a\}$, which is a notion dating back to work of Manning in the 1920s. By a theorem of Mann, Praeger and Seress, it is known that every extremely primitive group is either almost simple or affine, and all the almost simple examples have subsequently been determined. Similarly, Mann et al. have classified all of the affine extremely primitive groups up to a finite, but  undetermined, collection of groups. Moreover, if one assumes Wall's conjecture on the number of maximal subgroups of an almost simple group, then there is an explicit list of candidates, each of which has been eliminated in a recent paper by Burness and Thomas. So, modulo Wall's conjecture, the classification of extremely primitive groups is complete. In this paper we adopt a different approach, which allows us to complete this classification in full generality, independent of the veracity or otherwise of Wall's conjecture in the almost simple setting. Our method relies on recent work of Fawcett, Lee and others on the existence of regular orbits of almost simple groups acting on irreducible modules.
\end{abstract}

\maketitle

\section{Introduction}\label{s:intro}

Let $G \leqs {\rm Sym}(\O)$ be a finite primitive permutation group with point stabiliser $H=G_{\a} \neq 1$. Following Mann et al. \cite{MPS}, $G$ is said to be \emph{extremely primitive} if $H$ acts primitively on each of its orbits in $\O \setminus \{\a\}$. Examples include the usual action of ${\rm Sym}_{n}$ of degree $n$ and the action of ${\rm PGL}_{2}(q)$ on the projective line over $\mathbb{F}_q$. Extremely primitive groups arise naturally in several different contexts. For instance, they feature in the original constructions of some of the sporadic simple groups (in particular ${\rm J}_{2}$ and ${\rm HS}$) and they arise in the study of permutation groups with restricted movement (see \cite{PP}, for example). As far back as the 1920s, one can find important work of Manning \cite{Manning} on extremely primitive groups and they have been the main subject of several papers in recent years \cite{BPS, BPS2, BT, BTh_ep, MPS}.

In this paper, we complete the classification of the finite extremely primitive groups. Here the first main result is \cite[Theorem 1.1]{MPS} by Mann, Praeger and Seress, which reveals that every group of this form is either almost simple or affine. The sequence of papers \cite{BPS, BPS2, BTh_ep} yields a complete classification of the almost simple extremely primitive groups (see \cite[Table 1]{BTh_ep} for the complete list), so we may assume $G = V{:}H \leqs {\rm AGL}(V)$ is a primitive affine type group. Here $V = \mathbb{F}_p^d$ is a vector space with $p$ a prime and $H \leqs {\rm GL}(V)$ is irreducible. In \cite{MPS}, Mann et al. exhibit various families of extremely primitive affine groups and they are able to prove that their list is complete, up to the possibility of finitely many exceptions. Furthermore, they establish a number of important restrictions on the structure of any additional extremely primitive affine groups. For example, the problem is reduced to the case where $G$ is simply primitive (that is, primitive but not doubly transitive), $p=2$ and $H$ is almost simple.

Let $\mathcal{M}(H)$ be the set of maximal subgroups of $H$. In \cite{MPS}, Mann et al. apply asymptotic estimates on $|\mathcal{M}(H)|$ in order to obtain their affine classification up to a finite, but undetermined, list of additional groups. In this direction, a well known conjecture of G.E. Wall from the early 1960s (see \cite{Wall}) asserts that the bound $|\mathcal{M}(H)| \leqs |H|$ holds for every finite group $H$. Wall himself proved this for soluble groups, but it has recently been shown to be false in general (see \cite{Lub_wall}). However, the conclusion is still expected to be valid when $H$ is an almost simple group, but a proof remains far out of reach, even though there have been major advances in recent decades in our understanding of the subgroup structure of almost simple groups. Despite this difficulty, some strong asymptotic results have been established. For example, the main theorem of \cite{LSh96} implies that $|\mathcal{M}(H)| \leqs |H|$ for all sufficiently large alternating and symmetric groups. In \cite{MPS}, Mann et al. use the weaker bound $|\mathcal{M}(H)|<|H|^{8/5}$ for $|H| \gg 0$ (see \cite{LSh96}) to prove their classification up to finitely many exceptions. In fact, by assuming Wall's bound for  almost simple groups, they are able to determine an explicit list of additional possibilities (see \cite[Table 2]{MPS}), each of which has subsequently been eliminated by Burness and Thomas in \cite{BT}.

By combining all of the above work, we obtain a complete classification of the finite extremely primitive groups, modulo a proof of Wall's conjecture for almost simple groups. In this paper, our goal is to remove this dependence on Wall's conjecture. By adopting a different approach, we will show that the list of extremely primitive affine groups presented in \cite{MPS} is indeed complete, which constitutes the final step in this classification programme.

Our main theorem is the following. Note that in part (ii)(a), a prime divisor $r$ of $p^d-1$ is a \emph{primitive prime divisor} if $r$ does not divide $p^i-1$ for all $i=1, \ldots, d-1$ (in other words, the order of $\mbox{$p$ mod $r$}$ is $d$). 

\begin{theorem}\label{t:main}
Let $G \leqs {\rm Sym}(\O)$ be a finite primitive permutation group with point stabiliser $H$. Then $G$ is extremely primitive if and only if one of the following holds:
\begin{itemize}\addtolength{\itemsep}{0.2\baselineskip}
\item[{\rm (i)}] $G$ is almost simple and $(G,H)$ is one of the cases in \cite[Table 1]{BTh_ep}. 
\item[{\rm (ii)}] $G = V{:}H \leqs {\rm AGL}_{d}(p)$ is affine, $p$ is a prime and one of the following holds:

\vspace{1mm}

\begin{itemize}\addtolength{\itemsep}{0.2\baselineskip}
\item[{\rm (a)}] $H=Z_r.Z_e$, where $e$ divides $d$ and $r$ is a primitive prime divisor of $p^d-1$.
\item[{\rm (b)}] $p=2$ and $H = {\rm L}_{d}(2)$ with $d \geqs 3$, or $H={\rm Sp}_{d}(2)$ with $d \geqs 4$. 
\item[{\rm (c)}] $p=2$ and $(d,H) = (4,{\rm Alt}_6)$, $(4, {\rm Alt}_7)$, $(6, {\rm U}_{3}(3))$ or $(6,{\rm U}_{3}(3).2)$.
\item[{\rm (d)}] $p=2$ and $(d,H)$ is one of the following:
\[
\begin{array}{llll}
(10,{\rm M}_{12}) & (10,{\rm M}_{22}) & (10,{\rm M}_{22}.2) & (11,{\rm M}_{23}) \\
(11,{\rm M}_{24}) & (22, {\rm Co}_{3}) &  (24,{\rm Co}_{1}) & (2k, {\rm Alt}_{2k+1}) \\
(2k, {\rm Sym}_{2k+1}) &  (2\ell, {\rm Alt}_{2\ell+1}) & (2\ell, {\rm Sym}_{2\ell+1}) & (2\ell, \O_{2\ell}^{\pm}(2)) \\
(2\ell, {\rm O}_{2\ell}^{\pm}(2)) &  (8, {\rm L}_{2}(17)) & (8, {\rm Sp}_{6}(2)) & 
\end{array}
\]
where $k \geqs 2$ and $\ell \geqs 3$.
\end{itemize}
\end{itemize}
\end{theorem}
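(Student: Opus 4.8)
The approach rests on an elementary reformulation of extreme primitivity in the affine setting. Suppose $G = V{:}H \leqslant {\rm AGL}_d(p)$ is primitive, so $V = \mathbb{F}_p^d$ is an irreducible $\mathbb{F}_p H$-module. If $d \geqslant 2$ then $H$ fixes no nonzero vector, so every suborbit $v^H$ with $v \in V \setminus \{0\}$ has length at least $2$ and point stabiliser $H_v$; since a transitive action is primitive precisely when its point stabiliser is maximal, it follows that $G$ is extremely primitive if and only if $H_v$ is a maximal subgroup of $H$ for all $v \in V \setminus \{0\}$. In particular, unless $H$ is cyclic of prime order — which forces $d = 1$ and leads to part (ii)(a) with $e = 1$ — we deduce that $H_v \neq 1$ for every nonzero $v$; that is, $H$ has no regular orbit on $V$. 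This is exactly the scenario governed by the recent classification results on regular orbits of almost simple groups on irreducible modules, and it is what makes the dependence on Wall's conjecture unnecessary.

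The plan is to proceed in three stages. First, I would apply \cite[Theorem 1.1]{MPS} together with the completed classification of the almost simple examples in \cite{BPS, BPS2, BTh_ep} (which gives part (i)) to reduce to the affine case $G = V{:}H$, and then invoke the structural restrictions of Mann, Praeger and Seress \cite{MPS} to account for the groups in part (ii)(a) and for the remaining extremely primitive $2$-transitive affine groups, which include those in part (ii)(b). For the reverse direction one checks directly that these groups really are extremely primitive, in each case by verifying the maximality criterion above; for $H = {\rm L}_d(2)$ or ${\rm Sp}_d(2)$ on the natural module, for instance, the vector stabilisers are maximal parabolic subgroups, so the criterion is immediate. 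After this reduction we may assume $p = 2$, $H$ is almost simple and $G$ is simply primitive.

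Second, in this remaining situation the reformulation shows that $H$ has no regular orbit on $V$, and I would appeal to the work of Fawcett, Lee and others to obtain an explicit list of the pairs $(H, V)$ with $H$ almost simple, $V$ a faithful irreducible $\mathbb{F}_2 H$-module and $H$ having no regular orbit on $V$. This list comprises a small number of infinite families — natural modules for the classical groups over $\mathbb{F}_2$, fully deleted permutation modules for ${\rm Sym}_n$ and ${\rm Alt}_n$, and a few spin-type and low-dimensional modules — together with finitely many sporadic pairs. Third, for each pair $(H, V)$ on this list one determines whether $H_v$ is maximal in $H$ for every $v \in V \setminus \{0\}$. For the infinite families this reduces, using Witt's lemma and the known descriptions of the maximal subgroups of the relevant classical, symmetric and alternating groups, to checking maximality of the stabilisers of (non-)singular vectors and of subsets of a fixed size; this yields the families in parts (ii)(b) and (ii)(d). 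For the finitely many sporadic pairs the verification is largely computational (using \textsc{Magma}), and for most of them — in particular the larger examples such as $(24, {\rm Co}_1)$ and $(22, {\rm Co}_3)$ — the required information is already recorded in \cite{BT}; the surviving pairs are precisely those in parts (ii)(c) and (ii)(d).

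The main obstacle will be the third stage — carrying the case analysis over the no-regular-orbit list to completion. On the computational side this requires handling the larger sporadic candidates (such as ${\rm Co}_1$, ${\rm Co}_3$ and ${\rm M}_{24}$ on their modules over $\mathbb{F}_2$) efficiently, and on the theoretical side it calls on the full classification of maximal subgroups of the almost simple classical and alternating/symmetric groups to identify the vector stabilisers in the infinite families and confirm their maximality. A further, essential, point is to ensure that the cited regular-orbit results are complete for every almost simple group $H$ in characteristic $2$ — including the delicate defining-characteristic case, where $H$ is of Lie type over a field of characteristic $2$ — so that no candidate pair is overlooked; any pair not covered there must be settled directly, either by producing a regular orbit or by exhibiting a non-maximal vector stabiliser.
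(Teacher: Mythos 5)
Your central observation -- that for $H$ almost simple a regular orbit of $H$ on $V$ forces a trivial, hence non-maximal, vector stabiliser, so extreme primitivity implies $H$ has no regular orbit -- is exactly the engine of the paper, and your first-stage reductions via \cite[Theorems 1.1 and 1.2]{MPS} and \cite{BPS, BPS2, BTh_ep} match the paper's. (One small slip: a cyclic group of prime order $r$ can act irreducibly on $\mathbb{F}_p^d$ with $d>1$ when $r$ is a primitive prime divisor of $p^d-1$, so ``$H$ cyclic of prime order'' does not force $d=1$; this is harmless since the soluble case is already settled in \cite{MPS}.) However, the second and third stages of your plan contain a genuine gap. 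You propose to read off from the literature a complete list of pairs $(H,V)$ with $H$ almost simple over $\mathbb{F}_2$ admitting no regular orbit, and then to test maximality of every vector stabiliser on that list. No such complete list exists: the regular-orbit classifications you would invoke cover alternating and sporadic socles \cite{Faw1, Faw2}, cross-characteristic Lie type socles \cite{L1}, and linear groups in defining characteristic \cite{L2}, but there is no classification for the unitary, symplectic, orthogonal or exceptional groups over $\mathbb{F}_2$ acting on their $2$-restricted irreducible modules. You flag this as ``a point to ensure'' to be ``settled directly,'' but this is precisely where the bulk of the paper's work lies and it requires a method you have not supplied: bounding $\sum_{x}|x^H|\cdot 2^{\dim C_V(x)}$ over elements of prime order via the generation invariant $\a(x)$ of Guralnick--Saxl \cite{GS}, refined fixed-space bounds from Guralnick--Lawther \cite{GL}, and bespoke Jordan-form computations on spin modules, in order to manufacture regular orbits for all but a short list of highest weights.

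The second structural difference is that the paper never carries out your proposed direct maximality check on the surviving no-regular-orbit pairs. Instead it uses a fallback you omit entirely: whenever $|\mathcal{M}(H)|\leqs|H|$ can be verified (computationally for small groups, or from known maximal subgroup classifications), the combination of \cite[Theorem 4.8]{MPS} with the main theorem of \cite{BT} (Theorem \ref{t:wall} in the paper) settles the case outright. This is how all the small-rank classical groups, most sporadic and exceptional groups, and the residual cases from Lee's tables are dispatched. Your route of identifying vector stabilisers in the infinite families via Witt's lemma and checking their maximality is not wrong in principle -- for the modules in part (ii)(d) the forward implication is anyway \cite[Theorem 1.3]{MPS} -- but without the Wall-bound mechanism you would face a substantially longer case analysis, and without a substitute for the fixed-space counting argument in the defining-characteristic classical and exceptional cases the proof cannot be completed as outlined.
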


Note that the soluble examples are recorded in part (ii)(a), where $Z_n$ denotes a cyclic group of order $n$. The groups appearing in parts (ii)(b) and (ii)(c) are insoluble and doubly transitive, while each group in (ii)(d) is simply primitive and $H$ is almost simple. As noted above, Mann et al. prove that any additional extremely primitive affine group must be simply  primitive, with $p=2$ and $H$ almost simple, which provides a starting point for our work in this paper. More precisely, as we will explain in Section \ref{s:proof} below, we may assume that the socle $H_0$ of $H$ acts irreducibly on $V$ and one of the following holds:
\begin{itemize}\addtolength{\itemsep}{0.2\baselineskip}
\item[{\rm (I)}] $H_0$ is an alternating or sporadic simple group;
\item[{\rm (II)}] $H_0$ is a simple group of Lie type defined over a field of odd characteristic;
\item[{\rm (III)}] $H_0$ is a simple group of Lie type over $\mathbb{F}_2$ and $V = L(\l)$, where the highest weight $\l$ is $2$-restricted.
\end{itemize}
In addition, by combining \cite[Theorem 4.8]{MPS} with the main theorem of \cite{BT}, we can immediately eliminate any groups for which Wall's bound $|\mathcal{M}(H)| \leqs |H|$ is known to hold. In this way, we can often apply a computational approach (using {\sc Magma} \cite{Magma}) to rule out simple groups of suitably small order.

Our basic approach in Cases I, II and III relies on an elementary  observation that provides a connection to an important and widely studied problem in the representation theory of finite groups. Recall that $H$ has a regular orbit on the irreducible module $V$ if there exists a vector $v \in V$ such that the stabiliser $H_v$ is trivial. By extreme primitivity, the stabiliser $H_v$ of any nonzero vector $v \in V$ is a maximal subgroup of $H$ (since this is equivalent to the property that $H$ acts primitively on the $H$-orbit of $v$). But if $H_v=1$ then $H_v$ is non-maximal in $H$ (recall that $H$ is almost simple), whence $G$ is not extremely primitive. So our main goal is to establish the existence of a regular orbit of $H$ on the irreducible module $V$, whenever it is possible to do so. 

In the last few years, several authors have studied this existence problem in the setting we are interested in, with $H$ almost simple. For example, work of Fawcett et al. \cite{Faw1, Faw2} will quickly allow us to handle Case I above (see Proposition \ref{p:altspor}) and recent work of Lee \cite{L1} will eliminate Case II (see Proposition \ref{p:odd}). In Case III, we can apply results on maximal subgroups of exceptional groups of Lie type to reduce to the case where $H_0$ is a finite simple classical group over $\mathbb{F}_2$. Here the linear case will be handled by applying a special case of Lee's work in \cite{L2} on regular orbits of linear groups on modules in the defining characteristic. 

The remaining classical groups need closer attention and in most cases we are able to establish the existence of a regular orbit of $H$ on $V$. In order to do this, we use the trivial observation that if $H$ does not have a regular orbit on $V$ then each vector in $V$ is fixed by some element $x \in \mathcal{P}$, where $\mathcal{P}$ is the set of elements of prime order in $H$. In other words, if $C_V(x)$ denotes the $1$-eigenspace of $x$ on $V$, then $H$ has no regular orbit on $V$ if and only if 
\[
V = \bigcup_{x \in \mathcal{P}}C_V(x).
\]
Since $|V| = 2^d$, we conclude that $H$ has a regular orbit on $V$ if
\begin{equation}\label{e:2d}
|V| = 2^d > \sum_{x \in \mathcal{P}}|C_V(x)| = \sum_{i=1}^k|x_i^H| \cdot 2^{\dim C_V(x_i)},
\end{equation}
where $x_1, \ldots, x_k$ is a complete set of representatives of the $H$-classes of elements of prime order in $H$. This explains why we will be interested in deriving upper bounds on $|C_V(x_i)|$. With this aim in mind, let us observe that if $x \in H$ is nontrivial and $\a(x)$ is the minimal size of a subset $S \subseteq x^{H_0}$ with $\la H_0, x\ra = \la S \ra$, then the irreducibility of $H_0$ on $V$ forces  
\begin{equation}\label{e:cvx}
\dim C_V(x) \leqs \left(1-\frac{1}{\a(x)}\right)\dim V.
\end{equation}

In this way, we can use upper bounds on $\a(x)$ due to Guralnick and Saxl \cite{GS} to impose very strong restrictions on the highest weight $\l$, leaving a handful of low-dimensional modules to investigate in more detail. To handle these cases, we will often need to establish stronger bounds on the dimensions of the fixed spaces $C_V(x)$, either by working directly with the module or by applying bounds obtained in recent work by Guralnick and Lawther \cite{GL} at the level of the corresponding algebraic groups over the algebraic closure of $\mathbb{F}_2$.

\section{Proof of Theorem \ref{t:main}}\label{s:proof}

Let $G \leqs {\rm Sym}(\O)$ be a finite extremely primitive group with point stabiliser $H$. By \cite[Theorem 1.1]{MPS}, $G$ is either almost simple or an affine type group. The main results in \cite{BPS, BPS2, BTh_ep} provide a classification of the almost simple extremely primitive groups (see \cite[Table 1]{BTh_ep} for the complete list). Therefore, we may assume $G = V{:}H \leqs {\rm AGL}(V)$ is an affine group, where $V = \mathbb{F}_p^d$ for some prime $p$ and $H \leqs {\rm GL}(V)$ is irreducible. Moreover, in view of \cite[Theorem 1.2]{MPS}, we may assume $G$ is simply primitive, $p=2$ and $H$ is almost simple with socle $H_0$. The groups appearing in part (ii)(d) of Theorem \ref{t:main} are of this form and they are all extremely primitive by \cite[Theorem 1.3]{MPS}. So to complete the proof of Theorem \ref{t:main}, it remains to show that no additional examples arise.

Let $\mathcal{M}(H)$ be the set of maximal subgroups of $H$. The following result will be very useful.

\begin{thm}\label{t:wall}
Suppose $G$ is simply primitive, $p=2$, $H$ is almost simple and $|\mathcal{M}(H)| \leqs |H|$. Then $G$ is extremely primitive if and only if it is one of the groups recorded in part (ii)(d) of Theorem \ref{t:main}.
\end{thm}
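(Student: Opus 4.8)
The plan is to derive the result by assembling two theorems from the literature, namely \cite[Theorem 4.8]{MPS} and the main theorem of \cite{BT}. One direction is immediate: each of the groups recorded in part (ii)(d) of Theorem \ref{t:main} is extremely primitive by \cite[Theorem 1.3]{MPS}, and all of these groups are simply primitive with $p=2$ and $H$ almost simple, so they satisfy every hypothesis of the theorem. It therefore remains to prove the forward implication, so assume from now on that $G$ is extremely primitive.

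First I would invoke \cite[Theorem 4.8]{MPS}. Since $G$ is simply primitive, $p=2$, $H$ is almost simple and $|\mathcal{M}(H)| \leqs |H|$, this result shows that if $G$ is extremely primitive then either $(G,H)$ is one of the known examples appearing in part (ii)(d) of Theorem \ref{t:main}, or $(G,H)$ is one of the finitely many additional candidates listed explicitly in \cite[Table 2]{MPS}. Here the doubly transitive examples in parts (ii)(b) and (ii)(c), and the soluble examples in part (ii)(a), are automatically excluded by our standing hypotheses. The key observation underpinning this step is that the argument establishing \cite[Theorem 4.8]{MPS} only uses Wall's bound for the particular point stabiliser $H$ under consideration, which is precisely the hypothesis $|\mathcal{M}(H)| \leqs |H|$ that we have imposed.

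It then remains to eliminate the candidates in \cite[Table 2]{MPS}, and this is exactly the content of the main theorem of \cite{BT}: none of these candidates gives rise to an extremely primitive group. Combining the two steps, we conclude that an extremely primitive group $G$ satisfying the hypotheses of the theorem must be one of the groups in part (ii)(d), as required. The only real subtlety lies in the bookkeeping of the first step, namely in confirming that \cite[Theorem 4.8]{MPS} needs no more than $|\mathcal{M}(H)| \leqs |H|$ for the single group $H$, with no hidden appeal to Wall's conjecture for other almost simple groups. Beyond this there is no substantive obstacle, since the genuine difficulty in the classification arises in the complementary case, where $|\mathcal{M}(H)| \leqs |H|$ is not known and one must instead establish the existence of a regular orbit of $H$ on $V$.
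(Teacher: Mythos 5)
Your proposal is correct and follows exactly the same route as the paper, which proves Theorem \ref{t:wall} in one line by combining \cite[Theorem 4.8]{MPS} with the main theorem of \cite{BT}. Your additional remarks --- that the reverse implication follows from \cite[Theorem 1.3]{MPS} and that \cite[Theorem 4.8]{MPS} only requires Wall's bound for the specific stabiliser $H$ at hand --- are accurate elaborations of what the paper leaves implicit.
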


\begin{proof}
This follows by combining \cite[Theorem 4.8]{MPS} with the main theorem of \cite{BT}.
\end{proof}

We begin by handling the cases where $H_0$ is an alternating or sporadic group. This is an easy application of the results of Fawcett et al. \cite{Faw1, Faw2} on regular orbits.

\begin{prop}\label{p:altspor}
The conclusion to Theorem \ref{t:main} holds if $H_0$ is an alternating or sporadic group.
\end{prop}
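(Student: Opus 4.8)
The plan is to reduce, via Theorem \ref{t:wall}, to the situation where $H_0$ is one of a small number of alternating or sporadic groups for which Wall's bound $|\mathcal{M}(H)| \leqs |H|$ is not already known, and then to show in each remaining case that $H$ has a regular orbit on the irreducible $\mathbb{F}_2$-module $V$, so that by extreme primitivity (the stabiliser of a nonzero vector must be maximal, but a regular orbit produces a trivial, hence non-maximal, stabiliser) no new examples occur. Recall from Case I in the introduction that $H_0$ is an alternating group $\mathrm{Alt}_n$ or a sporadic simple group, acting irreducibly on $V = \mathbb{F}_2^d$, and $G = V{:}H$ is simply primitive. Since Wall's bound holds for all sufficiently large symmetric and alternating groups by \cite{LSh96} (and for the finitely many sporadic groups this can be checked directly, e.g.\ using the data in the literature or {\sc Magma}), Theorem \ref{t:wall} disposes of all but finitely many groups of Case~I, leaving only the examples already listed in part (ii)(d) of Theorem \ref{t:main} together with a finite list of small-rank alternating groups and a few sporadic groups to examine individually.

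For the remaining cases I would invoke the work of Fawcett et al.\ \cite{Faw1, Faw2} directly: these papers classify the pairs $(H,V)$ with $H$ almost simple, $H_0$ alternating or sporadic, and $V$ an irreducible faithful $H$-module over a finite field on which $H$ has \emph{no} regular orbit. Comparing that classification with the modules arising here (faithful irreducible $\mathbb{F}_2 H$-modules with $H_0$ alternating or sporadic), one finds that the only pairs with no regular orbit are precisely those that appear in part (ii)(d) of Theorem \ref{t:main} — namely the fully deleted permutation modules for $\mathrm{Alt}_n/\mathrm{Sym}_n$ and the handful of sporadic cases ($\mathrm{M}_{12}$, $\mathrm{M}_{22}$, $\mathrm{M}_{22}.2$, $\mathrm{M}_{23}$, $\mathrm{M}_{24}$, $\mathrm{Co}_3$, $\mathrm{Co}_1$) with the indicated module dimensions — and in all other cases $H$ has a regular orbit on $V$. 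For each such regular-orbit case we conclude immediately that $G$ is not extremely primitive. For the listed no-regular-orbit cases, extreme primitivity is already guaranteed by \cite[Theorem 1.3]{MPS}, so nothing further is required; one should, however, check (again citing \cite{Faw1, Faw2} or doing a direct {\sc Magma} computation for the small cases not covered by the general asymptotics) that no \emph{additional} module arises in the finitely many small alternating or sporadic groups that survive Theorem \ref{t:wall}.

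The main obstacle, such as it is, is bookkeeping rather than mathematics: one must match the hypotheses of \cite{Faw1, Faw2} (which typically allow arbitrary characteristic and may phrase the conclusion in terms of the full list of no-regular-orbit modules) against the precise situation here (characteristic $2$, $H_0$ alternating or sporadic, $V$ irreducible, $G$ simply primitive), and verify that the no-regular-orbit modules that survive these constraints are exactly the ones in part (ii)(d). A subsidiary point is to confirm that the sporadic groups not handled by the asymptotic form of Wall's bound can be treated computationally — checking $|\mathcal{M}(H)| \leqs |H|$ directly and then either exhibiting a regular orbit on each relevant irreducible $\mathbb{F}_2$-module or recognising it in the (ii)(d) list. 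Since all of these are finite, explicit computations with small groups, the argument goes through without difficulty, and we obtain the stated conclusion.
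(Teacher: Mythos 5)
Your proposal assembles the right two ingredients (Theorem \ref{t:wall} via Wall's bound, and the regular-orbit theorems of Fawcett et al.), but the way you divide the work between them creates genuine gaps. First, you propose to dispose of all large alternating groups by citing \cite{LSh96} for Wall's bound on ``all sufficiently large'' symmetric and alternating groups. That asymptotic is not effective, so it does not hand you a determined finite list of small-rank groups to examine individually --- this is exactly the obstruction that left the classification of Mann et al.\ incomplete in the first place. The paper runs the division the other way: it verifies $|\mathcal{M}(H)| \leqs |H|$ directly (by {\sc Magma}) for $\mathrm{Alt}_m$ with $m \leqs 12$, and for $m > 12$ applies \cite[Theorem 1.1]{Faw1}, which in that range says the \emph{only} module without a regular orbit is the fully deleted permutation module (already in part (ii)(d)). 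Relatedly, your claim that the no-regular-orbit pairs in \cite{Faw1, Faw2} are ``precisely those that appear in part (ii)(d)'' is not correct for small $m$: the exception tables of \cite{Faw1} contain further modules for small alternating groups on which there is no regular orbit (e.g.\ in dimension $4$ for $\mathrm{Alt}_6$, $\mathrm{Alt}_7$), and for such cases the regular-orbit method is silent --- one cannot conclude either way --- so an independent argument (here, Wall's bound for $m \leqs 12$ plus Theorem \ref{t:wall}) is required.

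Second, for the sporadic groups you assert that Wall's bound ``can be checked directly'' in every case. This fails for the Monster, whose maximal subgroups are not completely classified, so $|\mathcal{M}(H)|$ cannot simply be computed; the paper must instead invoke \cite[Theorem 1.1]{Faw2}, which guarantees a regular orbit of $\mathbb{M}$ on every faithful irreducible module and hence rules it out. For all other sporadic groups the maximal subgroups are known and the Wall bound does hold, which is the route the paper takes. So your overall strategy is the intended one, but you need to (i) replace the ineffective asymptotic by an explicit verification of Wall's bound for $m \leqs 12$, (ii) reserve the Fawcett et al.\ regular-orbit theorem for the range where its exception list reduces to the fully deleted permutation module, and (iii) treat the Monster by the regular-orbit theorem rather than by counting maximal subgroups.
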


\begin{proof}
First assume $H_0 = {\rm Alt}_m$ with $m \geqs 5$. With the aid of {\sc Magma} \cite{Magma}, it is routine to check that $|\mathcal{M}(H)|\leqs |H|$ if $m \leqs 12$, so by Theorem \ref{t:wall} we may assume $m > 12$. We now apply \cite[Theorem 1.1]{Faw1}, which states that either $V = \mathbb{F}_2^d$ is the fully deleted permutation module for $H$ over $\mathbb{F}_2$ (see \cite[p.185]{KL} for the definition), or $H$ has a regular orbit on $V$. As explained in Section \ref{s:intro}, if $H$ has a regular orbit then $G$ is not extremely primitive. On the other hand, if $V$ is the fully deleted permutation module over $\mathbb{F}_2$ then $G$ is extremely primitive (see \cite[Lemma 4.2]{MPS}) and these groups are recorded in part (ii)(d) of Theorem \ref{t:main}.

Now assume $H_0$ is a sporadic group and let $\mathbb{M}$ be the Monster group. If $H_0 \ne \mathbb{M}$ then all of the maximal subgroups of $H$ are known up to conjugacy (a convenient reference is \cite{Wil}) and it is straightforward to check that $|\mathcal{M}(H)| \leqs |H|$. Therefore, we may assume $H = \mathbb{M}$. Here \cite[Theorem 1.1]{Faw2} implies that $H$ has a regular orbit on $V$ and the result follows.
\end{proof}

For the remainder, we may assume $H_0$ is a simple group of Lie type. By primitivity, $H \leqs {\rm GL}_d(2)$ is irreducible and thus $C_{{\rm GL}_d(2)}(H) \cong \mathbb{F}_{2^a}^{\times}$ for some integer $a$ dividing $d$ (see \cite[Lemma 2.10.2(i)]{KL}). This allows us to view $H$ as a subgroup of ${\rm \Gamma L}_{d/a}(2^a) = {\rm GL}_{d/a}(2^a).a$ with $H_0 \leqs {\rm GL}_{d/a}(2^a)$, and we identify $V$ as a $(d/a)$-dimensional vector space $V'$ over $\mathbb{F}_{2^a}$ on which $H$ acts semi-linearly. By \cite[Lemma 3.2]{MPS}, $H_0$ acts irreducibly on $V$. Moreover, by \cite[Lemma 3.6]{MPS} and the subsequent discussion in \cite{MPS}, we may assume that $H_0$ acts absolutely irreducibly on $V'$.

\begin{lem}\label{l:socle}
If $H_0$ has a regular orbit on $V'$, then $G$ is not extremely primitive.
\end{lem}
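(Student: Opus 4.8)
The plan is to deduce the result from the basic observation about extreme primitivity already recorded in Section \ref{s:intro}, combined with the fact that the relevant stabiliser cannot be maximal. Suppose $H_0$ has a regular orbit on $V'$, so that there is a vector $v \in V'$ with $(H_0)_v = 1$. Since $H$ acts semi-linearly on $V'$ with $H_0 \leqs {\rm GL}_{d/a}(2^a) \leqn H$ and $H/H_0$ embeds in ${\rm \Gamma L}_{d/a}(2^a)/{\rm GL}_{d/a}(2^a)$, the pointwise stabiliser $H_v$ of this same vector satisfies $H_v \cap H_0 = (H_0)_v = 1$, so $H_v$ is isomorphic to a subgroup of the soluble group $H/H_0$; in particular $|H_v|$ divides $|H/H_0|$, which is small (a divisor of $2a \cdot |{\rm Out}(H_0)|_{p'}$-type order, in any case far smaller than $|H|$).

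Now I would argue that such an $H_v$ cannot be a maximal subgroup of $H$. The key point is that $H$ is almost simple with socle $H_0$, and $H_v$ intersects $H_0$ trivially. If $H_v$ were maximal in $H$, then since $H_0 \not\leqs H_v$ we would have $H = H_0 H_v$, forcing $|H_0 \cap H_v| = |H_0||H_v|/|H| = |H_0|/|H{:}H_v|$; but $H_0 \cap H_v = 1$ gives $|H_v| = |H{:}H_0|$, meaning $H_v$ is a complement to $H_0$ in $H$. A proper almost simple group has no maximal complement to its socle when the socle is nonabelian simple: any maximal subgroup $M$ with $M \cap H_0 = 1$ would correspond under the bijection between subgroups of $H$ containing $H_0$ and subgroups of $H/H_0$ to a maximal — hence proper nontrivial unless $H = H_0$ — subgroup of $H/H_0$ only if $M \geqs H_0$, a contradiction; so in fact $H_v \leqs H_0$ forces $H_v = 1$, and $1$ is not maximal in $H$ since $H \neq Z_p$. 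More directly: take any element $t \in H_0$ of prime order; then $\la H_v, t\ra$ is a proper subgroup of $H$ strictly containing $H_v$ (it is proper because its intersection with $H_0$ is the proper subgroup $\la t \ra$ of the simple, hence non-cyclic if $|H_0|$ is not prime — and $H_0$ is never of prime order here), contradicting maximality of $H_v$.

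Having shown $H_v$ is non-maximal in $H$, the conclusion follows immediately: as recalled in Section \ref{s:intro}, extreme primitivity of $G = V{:}H$ would force the stabiliser of every nonzero vector in $V$ to be a maximal subgroup of $H$, since that is equivalent to $H$ acting primitively on the corresponding $H$-orbit. Applying this to $v \in V' = V$ (viewing $V'$ as the underlying $\mathbb{F}_2$-space $V$) contradicts the non-maximality of $H_v$. Hence $G$ is not extremely primitive.

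I expect the only genuinely delicate step to be the verification that a maximal subgroup of an almost simple group $H$ with nonabelian simple socle $H_0$ cannot meet $H_0$ trivially — equivalently, that $H_0$ lies in every maximal subgroup that would otherwise be a complement. This is standard (it is immediate from the correspondence theorem applied to $H/H_0$ together with the fact that $H_0$ is a minimal normal subgroup of $H$), so the proof is short; the main content of the lemma is really just assembling the definitions correctly, in particular tracking that a regular orbit of $H_0$ produces a vector whose full stabiliser in $H$ is too small to be maximal.
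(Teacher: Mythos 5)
Your overall strategy is the same as the paper's: observe that a vector $v$ in a regular $H_0$-orbit has $H_v \cap H_0 = (H_0)_v = 1$, so $H_v \cong H_0H_v/H_0$ embeds in ${\rm Out}(H_0)$, and then argue that such an $H_v$ cannot be maximal in $H$. However, the step you describe as ``standard'' and ``immediate from the correspondence theorem'' --- that a maximal subgroup of an almost simple group cannot meet the socle trivially --- is precisely the load-bearing step, and neither of your justifications establishes it. The correspondence theorem only concerns subgroups that \emph{contain} $H_0$; a putative maximal complement $M$ to $H_0$ does not contain $H_0$, so the correspondence says nothing about it, and your conclusion ``so in fact $H_v \leqs H_0$'' is a non sequitur. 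Your ``more direct'' argument also fails: for $t \in H_0$ of prime order, the subgroup $\la H_v, t\ra$ contains every $H_v$-conjugate of $t$, so its intersection with $H_0$ is in general far larger than $\la t \ra$, and nothing prevents $\la H_v, t\ra = H$. (The fact that $H_v$ is a complement to the minimal normal subgroup $H_0$ does not formally preclude maximality; ruling this out is equivalent to ruling out a primitive action of $H$ in which the nonabelian simple socle acts regularly, which is a genuine theorem rather than an exercise in the isomorphism theorems.)

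The claim is true, but the paper supplies the missing input by citing the Li--Zhang classification of maximal soluble subgroups of almost simple groups \cite{LZ}: since $H_v$ is soluble of order dividing $|{\rm Out}(H_0)|$, it suffices to check that every soluble maximal subgroup $K$ of $H$ satisfies $|K| > |{\rm Out}(H_0)|$, which is a routine verification against their lists. If you wished to avoid that citation you would still need some substantive fact about simple groups --- for instance, when $H/H_0$ has prime order one can use that a prime-order automorphism of a nonabelian finite simple group has nontrivial fixed points, so that $H_v < C_{H_0}(H_v)H_v < H$ --- but as written your proof has a genuine gap exactly where the real content lies.
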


\begin{proof}
Suppose $0 \ne v \in V'$ and $(H_0)_v = 1$. It suffices to show that $H_v$ is a non-maximal subgroup of $H$ (as previously noted, this implies that the action of $H$ on the orbit of $v$ is not primitive). To see this, first note that $H_v \cong H_0H_v/H_0 \leqs {\rm Out}(H_0)$, so $H_v$ is a soluble subgroup of $H$ and its order divides $|{\rm Out}(H_0)|$. In \cite{LZ}, Li and Zhang determine all the maximal soluble subgroups of every almost simple group and it is routine to check that there are no examples with the required properties. Indeed, $|K|>|{\rm Out}(H_0)|$ for every soluble maximal subgroup $K$ of $H$. The result follows.
\end{proof}

\begin{prop}\label{p:odd}
The conclusion to Theorem \ref{t:main} holds if $H_0$ is a group of Lie type in odd characteristic.
\end{prop}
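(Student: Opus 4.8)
The plan is to reduce, via Lemma~\ref{l:socle}, to a question about regular orbits of $H_0$, and then to invoke the cross-characteristic regular orbit classification of Lee~\cite{L1}. Recall that $H_0$ is a simple group of Lie type in odd characteristic acting faithfully and absolutely irreducibly on the $\mathbb{F}_{2^a}$-space $V' = \mathbb{F}_{2^a}^{d/a}$. Since $2$ is coprime to the defining characteristic of $H_0$, this is a faithful irreducible cross-characteristic module, so the main theorem of \cite{L1}, applied with the quasisimple group taken to be $H_0$ acting on $V'$, asserts that $H_0$ has a regular orbit on $V'$ unless $(H_0,V')$ belongs to a short, explicit list of exceptional pairs. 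If $(H_0,V')$ is not in this list, then Lemma~\ref{l:socle} shows immediately that $G$ is not extremely primitive, so $G$ gives no counterexample, and we may assume $(H_0,V')$ is one of the exceptions.

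It therefore remains to deal with the finitely many exceptional pairs $(H_0,V')$ consistent with our hypotheses, namely that the ambient characteristic is $2$, that $H_0$ is not a group of Lie type in characteristic $2$ (accounting carefully for the exceptional isomorphisms among low-rank groups, so that each socle is considered exactly once and in the intended characteristic), and that $H_0$ acts absolutely irreducibly on $V'$. For each such pair, $H_0$ has small order, and for each of the few almost simple groups $H$ with socle $H_0$ — corresponding to the subgroups of ${\rm Out}(H_0)$ that stabilise $V'$ up to isomorphism — it is routine to verify, using {\sc Magma}~\cite{Magma} and the available lists of maximal subgroups, that $|\mathcal{M}(H)| \leqs |H|$. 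Theorem~\ref{t:wall} then shows that $G$ is extremely primitive if and only if it appears in part~(ii)(d) of Theorem~\ref{t:main}; since the unique case there with $H_0$ of Lie type in odd characteristic is $(d,H) = (8,{\rm L}_2(17))$, which is extremely primitive by \cite[Theorem 1.3]{MPS}, this completes the proof.

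The main obstacle is organisational rather than conceptual: one has to extract from the tables in \cite{L1} precisely the exceptional pairs relevant to characteristic $2$, resolve the low-rank exceptional isomorphisms so as not to double count or misclassify a socle, and then settle each resulting almost simple group. For most of these the computation $|\mathcal{M}(H)| \leqs |H|$ followed by an appeal to Theorem~\ref{t:wall} suffices; for any borderline case where this is awkward one can instead argue directly, exhibiting a nonzero $v \in V$ whose stabiliser $H_v$ is properly contained in a maximal subgroup of $H$, which by the observation in Section~\ref{s:intro} already forces $G$ to fail extreme primitivity. Since all the groups arising here are small, no serious difficulty is expected.
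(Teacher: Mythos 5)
Your proposal is correct and follows essentially the same route as the paper: reduce via Lemma~\ref{l:socle} to the regular orbit question, invoke Lee's cross-characteristic classification (the paper cites \cite[Corollary 1.2]{L1} and \cite[Table 1.2]{L1}), and dispose of the finitely many exceptional pairs by verifying $|\mathcal{M}(H)| \leqs |H|$ in {\sc Magma} and applying Theorem~\ref{t:wall}. Your additional remarks about exceptional isomorphisms and the $(8,{\rm L}_2(17))$ case are consistent with, and slightly more explicit than, the paper's treatment.
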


\begin{proof}
In view of Theorem \ref{t:wall} and Lemma \ref{l:socle}, it suffices to show that either $H_0$ has a regular orbit on $V' = \mathbb{F}_{2^a}^{d/a}$, or $|\mathcal{M}(H)| \leqs |H|$. Set $G_0 = V'{:}H_0$, so $G_0$ is a primitive affine group and $H_0$ is a simple and absolutely irreducible subgroup of ${\rm GL}(V')$. We are now in a position to apply \cite[Corollary 1.2]{L1} with respect to $G_0$, which implies that either $H_0$ has a regular orbit on $V'$, or $H_0$ belongs to a finite list of simple groups that can be read off from \cite[Table 1.2]{L1}. In each of these cases, we can compute $|\mathcal{M}(H)|$ precisely (using {\sc Magma} \cite{Magma}, for example) and then apply Theorem \ref{t:wall} to conclude. For example, if $H_0 = {\rm U}_n(r)$ with $n \geqs 3$ and $r$ odd, then this approach reduces the problem to the groups with $H_0 \in \{ {\rm U}_3(3), {\rm U}_3(5), {\rm U}_4(3) \}$ and in each case the bound $|\mathcal{M}(H)| \leqs |H|$ is easily verified.
\end{proof}

To complete the proof of Theorem \ref{t:main}, it remains to consider the case where $H_0$ is a simple group of Lie type over $\mathbb{F}_{2^e}$. As explained in \cite[Section 4]{MPS} (see Lemmas 4.3 and 4.4), we may assume that $a = e =1$, so $H_0 \leqs {\rm GL}(V) = {\rm GL}_{d}(2)$ is absolutely irreducible. In addition, we may assume $V = L(\l)$, where $\l$ is a $2$-restricted highest weight. In other words, if we fix a set $\{\l_1, \ldots, \l_{\ell}\}$ of fundamental dominant weights for $H_0$ (labelled in the usual way, see \cite{Bou}), then $\l = \sum_{i}c_i\l_i$ with $c_i \in \{0,1\}$ for all $i$. If $H_0 = {\rm U}_{\ell+1}(2)$ or $\O_{2\ell}^{-}(2)$ then there are some additional restrictions on $\l$. Namely, if $H_0$ is unitary then the fact that $V$ is defined over $\mathbb{F}_2$ implies that $\l$ is fixed under the corresponding graph automorphism of the root system of type $A_{\ell}$. In other words, if $\l = \sum_{i}c_i\l_i$ then $c_i = c_{\ell+1-i}$ for all $1 \leqs i \leqs \lfloor \ell/2\rfloor$. Similarly, if $H_0 = \O_{2\ell}^{-}(2)$ then $c_{\ell-1} = c_{\ell}$. 

\begin{prop}\label{p:ex}
The conclusion to Theorem \ref{t:main} holds if $H_0$ is an exceptional group of Lie type over $\mathbb{F}_2$.
\end{prop}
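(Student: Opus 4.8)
The plan is to reduce Proposition~\ref{p:ex} to an application of Theorem~\ref{t:wall} and Lemma~\ref{l:socle}, so that it suffices to show that for each exceptional group $H_0$ of Lie type over $\mathbb{F}_2$ arising here, either $H_0$ has a regular orbit on $V = L(\l)$ with $\l$ a $2$-restricted highest weight, or $|\mathcal{M}(H)| \leqslant |H|$. The list of candidate socles is short: $H_0 \in \{{}^2B_2(2)', G_2(2)', {}^2G_2(3)', {}^3D_4(2), {}^2F_4(2)', F_4(2), E_6(2), {}^2E_6(2), E_7(2), E_8(2)\}$, where several of the small cases (for instance ${}^2B_2(2)'$, $G_2(2)'$, ${}^2G_2(3)'$, ${}^2F_4(2)'$) actually have socle an alternating or classical group and have already been dealt with, or are excluded because they are not simple of Lie type over $\mathbb{F}_2$ in the relevant sense. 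For the genuinely exceptional cases of small order --- $G_2(2)' \cong {\rm U}_3(3)$, ${}^3D_4(2)$, ${}^2F_4(2)'$, $F_4(2)$ --- one computes $|\mathcal{M}(H)|$ directly in {\sc Magma} \cite{Magma} using the known lists of maximal subgroups, verifies Wall's bound $|\mathcal{M}(H)| \leqslant |H|$, and concludes via Theorem~\ref{t:wall} (noting that the resulting examples, such as $(6,{\rm U}_3(3))$ and $(6,{\rm U}_3(3).2)$, already appear in part (ii)(c)).

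For the remaining large exceptional groups $H_0 \in \{F_4(2), E_6(2), {}^2E_6(2), E_7(2), E_8(2)\}$ --- where Wall's bound is not known --- the strategy is to establish a regular orbit on $V' = V$ using the counting criterion~\eqref{e:2d}. First I would enumerate the $2$-restricted dominant weights $\l$ and record $\dim L(\l)$; by a result of Lübeck, the $2$-restricted irreducibles of dimension below a generous bound (say, a fixed polynomial in the rank) are classified, and for each such $\l$ one has $d = \dim V$ available explicitly. The key inequality to exploit is~\eqref{e:cvx}: combining it with the Guralnick--Saxl bounds \cite{GS} on $\a(x)$ for elements $x$ of prime order gives $\dim C_V(x) \leqslant (1 - 1/\a(x))\dim V$, and since $\a(x)$ is bounded below by a constant (typically $\a(x) \geqslant 2$, with stronger lower bounds for most classes), we obtain $\dim C_V(x) \leqslant \tfrac{1}{2}\dim V$ for all but a controlled set of "small" classes (long root elements and their analogues, where $\a(x)$ can be as large as the rank). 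One then bounds $\sum_{i=1}^k |x_i^H| \cdot 2^{\dim C_V(x_i)}$ by splitting the sum: the contribution from classes with $\a(x) \geqslant 3$ is at most (number of prime-order classes) $\times |H| \times 2^{2d/3}$, which is dominated by $2^d$ once $d$ is large relative to $\log|H|$ and the class number; the contribution from the root-element classes, where $\dim C_V(x)$ is close to $d$, must be controlled separately using the actual class sizes (polynomially bounded in $|H|^{1/2}$ roughly, since root elements form a small class) together with sharper dimension bounds for $C_V(x)$ coming from \cite{GL} at the algebraic-group level. Since $\dim V \geqslant $ the dimension of the minimal module (which for $E_8(2)$ is $248$, for $E_7(2)$ is $56$, etc.) and $\log_2|H|$ is of comparable or smaller order, the inequality~\eqref{e:2d} holds outright for all but finitely many small modules $V$.

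The finitely many residual modules --- those of smallest dimension, where the crude sum in~\eqref{e:2d} might fail --- I would handle individually. For each such pair $(H_0, \l)$ one improves the estimate in one of two ways: either by using the precise values $\dim C_V(x)$ for the handful of relevant unipotent and semisimple classes, read off from \cite{GL} (Guralnick--Lawther give, for exceptional algebraic groups, the exact Jordan block structure and hence fixed-space dimensions on the small modules), and re-running the count~\eqref{e:2d}; or, when even this is too weak, by a direct argument exhibiting a vector $v \in V$ with trivial stabiliser in $H_0$ --- for example, by a short {\sc Magma} computation in the few smallest cases ($F_4(2)$ on $L(\l_4)$ of dimension $26$, ${}^2E_6(2)$ and $E_6(2)$ on the $27$-dimensional module, $E_7(2)$ on the $56$-dimensional module), where the group order and module are just within computational range. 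Once a regular orbit is established, Lemma~\ref{l:socle} shows $G$ is not extremely primitive, completing the proposition.

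I expect the main obstacle to be the root-element (and near-root-element) classes: these have $\a(x)$ growing with the rank, so $\dim C_V(x)$ is within $O(1)$ of $\dim V$, and the naive bound $|x^H| \cdot 2^{\dim C_V(x)}$ can exceed $2^d$. The resolution requires genuinely sharp input --- the exact $C_V(x)$ dimensions from \cite{GL} on the specific small modules, not merely~\eqref{e:cvx} --- together with the observation that there are very few such classes and their sizes, while large, are of order roughly $2^{d - \dim C_V(x)}$ times a polynomial factor, so the product is controlled provided the gap $d - \dim C_V(x)$ is bounded below by a constant. Pinning down that constant uniformly across the five large exceptional families, and checking the handful of genuinely borderline low-dimensional modules by hand or machine, is where the real work lies.
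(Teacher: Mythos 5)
Your overall reduction (Theorem \ref{t:wall} plus Lemma \ref{l:socle}) is the right frame, and your treatment of $E_8(2)$ via \eqref{e:2d} and the Guralnick--Saxl bound $\a(x)\leqs 11$ matches the paper's argument for that group. But there are two genuine gaps. First, for $H_0 \in \{F_4(2), E_6^{\e}(2), E_7(2)\}$ you declare Wall's bound ``not known'' and set out to prove the existence of regular orbits instead. In fact the maximal subgroups of these groups (and their automorphism groups) are completely classified in the literature --- \cite{NW} for $F_4(2)$, \cite{KW, Wil2} for $E_6^{\e}(2)$, \cite{BBR} for $E_7(2)$ --- so $|\mathcal{M}(H)| \leqs |H|$ can simply be verified and Theorem \ref{t:wall} applied; this is how the paper disposes of every exceptional group other than $E_8(2)$. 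Worse, your proposed fallback for the residual low-dimensional modules cannot work: for $F_4(2)$ on the $26$-dimensional module, $E_6^{\e}(2)$ on the $27$-dimensional module and $E_7(2)$ on the $56$-dimensional module one has $|H| > 2^d = |V|$ (e.g. $|F_4(2)| \approx 2^{52}$ versus $|V|=2^{26}$), so no regular orbit exists and a {\sc Magma} search for a regular vector would return nothing. These cases genuinely require the maximal-subgroup route, not a sharpening of \eqref{e:2d}.

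Second, for $E_8(2)$ you do not isolate the adjoint module $V = L(\l_8)$ of dimension $248$ as a problem case. There the crude bound gives $2^{248 + \frac{10}{11}\cdot 248} \gg 2^{248}$, a long root element alone contributes roughly $2^{58}\cdot 2^{190} = 2^{248}$ to the sum in \eqref{e:2d}, and since $|E_8(2)|$ is itself close to $2^{248}$ the existence of a regular orbit is at best delicate and certainly not accessible to a ``short {\sc Magma} computation.'' The paper handles this module by citing \cite[Proposition 4.1]{BT}, where it was already eliminated (it appears in \cite[Table 2]{MPS}), and only then uses L\"ubeck's tables to conclude that every remaining $2$-restricted module for $E_8(2)$ has $d \geqs 3626$, at which point the crude estimate with $\a(x)\leqs 11$ succeeds. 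Without that external input, your argument for $E_8(2)$ is incomplete.
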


\begin{proof}
First assume $H = H_0 = E_8(2)$. The case where $V$ is the adjoint module for $H$ appears in \cite[Table 2]{MPS} and it was handled in the proof of \cite[Proposition 4.1]{BT}. Therefore, by inspecting \cite[Table A.53]{Lub} we deduce that $d \geqs 3626$. 
Recall that $H$ has a regular orbit on $V$ if the inequality in \eqref{e:2d} is satisfied, where $\mathcal{P}$ is the set of elements of prime order in $H$. Given $x \in \mathcal{P}$, let $\a(x)$ be the minimal size of a subset $S \subseteq x^H$ with $H = \la S \ra$ and recall the upper bound on $\dim C_V(x)$ in \eqref{e:cvx}. By \cite[Theorem 5.1]{GS} we have $\a(x) \leqs 11$ for all $x \in \mathcal{P}$ and thus
\[
\bigcup_{x \in \mathcal{P}}|C_V(x)|  < |H|\cdot 2^{\left(1-\frac{1}{11}\right)d} < 2^{248+\frac{10}{11}d}.
\]
But one can check that this upper bound is less than $|V| = 2^d$ for all $d \geqs 3626$, whence $H$ has a regular orbit on $V$ and we conclude that $G$ is not extremely primitive.

In the remaining cases
\[
H_0 \in \{E_7(2), E_6^{\e}(2), F_4(2), G_2(2)', {}^2F_4(2)', {}^3D_4(2)\}
\]
we have a complete description of the maximal subgroups of $H$ up to conjugacy and it is routine to check that $|\mathcal{M}(H)| \leqs |H|$. Indeed, the maximal subgroups in the latter three cases can be accessed using {\sc Magma}, while we refer the reader to \cite{BBR}, \cite{KW, Wil2} and \cite{NW}  for the groups with socle $H_0 = E_7(2)$, $E_6^{\e}(2)$ and $F_4(2)$, respectively.  Therefore, in each of these cases the result follows via Theorem \ref{t:wall}.  
\end{proof}

For the remainder, we may assume $H_0$ is a finite simple classical group over $\mathbb{F}_2$ with natural module $W$ of dimension $n$. Let $V_{{\rm adj}}$ and $V_{{\rm alt}}$ denote the largest composition factors of the adjoint module and the alternating square module $\Lambda^2(W)$ for $H_0$, respectively. 

\begin{prop}\label{p:special}
The conclusion to Theorem \ref{t:main} holds if $V =W$ or $V_{{\rm alt}}$, or if $H_0 = {\rm L}_{n}^{\e}(2)$ and $V = V_{{\rm adj}}$. 
\end{prop}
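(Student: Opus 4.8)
The three module families singled out in the statement are exactly those for which $\dim V$ is of the same order of magnitude as $\log_2|H_0|$. Consequently the naive regular-orbit inequality \eqref{e:2d} with the uniform bound $\a(x)\leqs 11$ is far too weak here, and in several cases --- for instance $H_0={\rm L}_n(2)$ or ${\rm Sp}_n(2)$ with $V=V_{{\rm alt}}$ --- one in fact has $|V|<|H_0|$, so no regular orbit exists at all, while Wall's bound $|\mathcal{M}(H)|\leqs|H|$ is unavailable for classical groups over $\mathbb{F}_2$ of unbounded rank. The plan is to treat the three types of module in turn, in each case arriving at one of: (a) $G$ is already recorded in Theorem \ref{t:main}; (b) $H_0$ has a regular orbit on $V$, so $G$ is not extremely primitive by Lemma \ref{l:socle}; (c) $|\mathcal{M}(H)|\leqs|H|$, so $G$ is one of the groups in part (ii)(d) of Theorem \ref{t:main} by Theorem \ref{t:wall}; or (d) there is a nonzero $v\in V$ whose stabiliser $H_v$ is a proper \emph{non-maximal} subgroup of $H$, which again forces $G$ to be non-extremely primitive.

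For $V=W$ this is essentially immediate. Since $a=1$, the module $W=\mathbb{F}_2^n$ is defined over $\mathbb{F}_2$, which rules out the natural module of a unitary group (that module is naturally over $\mathbb{F}_4$ and would give $a=2$), so $H_0\in\{{\rm L}_n(2),{\rm Sp}_n(2),\O_n^{\e}(2)\}$, acting on its natural module. The orbits of $H$ on the nonzero vectors are classical --- a single orbit for $H_0$ linear or symplectic, and the two orbits of singular and non-singular vectors for $H_0$ orthogonal --- with point stabilisers among the parabolic and reducible maximal subgroups of $H$, so $G$ is extremely primitive by \cite[Lemma 4.2]{MPS} and appears in part (ii)(b), (ii)(c) or (ii)(d) of Theorem \ref{t:main}. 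The low-rank coincidences --- such as $\O_6^+(2)\cong{\rm L}_4(2)$, or ${\rm Sp}_4(2)\cong{\rm Sym}_6$ with point stabiliser the group ${\rm Alt}_6$ recorded in (ii)(c) --- are all consistent with the statement, and no new examples arise.

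Now suppose $V=V_{{\rm alt}}$, or that $H_0={\rm L}_n^{\e}(2)$ and $V=V_{{\rm adj}}$. Here the vectors of $V$ may be identified with (images of) linear operators $\theta$ on $W$ lying in $\Lambda^2(W)$ or in the Lie algebra of $H_0$, and the $H_0$-orbit of $\theta$ is controlled by its rank and, where applicable, by its position relative to the form fixed by $H_0$. When $H_0$ has order bounded by an explicit constant I will enumerate $\mathcal{M}(H)$ in {\sc Magma}, verify $|\mathcal{M}(H)|\leqs|H|$, and conclude by Theorem \ref{t:wall}, after first matching the finitely many surviving small cases to the list in Theorem \ref{t:main} --- in particular identifying the exceptional-isomorphism overlaps, for example that $\Lambda^2(\mathbb{F}_2^4)$ for ${\rm L}_4(2)$ is the natural module for $\O_6^+(2)$ in part (ii)(d). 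For $n$ large I will take $\theta$ of small rank $r$, typically $r=4$ for the $\Lambda^2$-type modules and a rank-$2$ nilpotent for $V_{{\rm adj}}$: then $H_\theta$ stabilises the subspaces $\operatorname{im}(\theta)$ and $\ker(\theta)$ of $W$, so it lies inside a reducible maximal subgroup $M$ of $H$, and a further elementary constraint coming from $\theta$ --- for $\Lambda^2$-type modules, that $H_\theta$ preserves the non-degenerate form induced by $\theta$ on $\operatorname{im}(\theta)$; for $V_{{\rm adj}}$, that the centraliser of a non-central element is small --- shows $H_\theta\subsetneq M$. Hence $H_\theta$ is non-maximal and $G$ is not extremely primitive.

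The crux of the argument is the large-$n$ case just described, since there neither regular orbits nor Wall's bound can be invoked and everything hinges on showing $H_\theta\subsetneq M$. Making this precise requires three things: (i) checking that the reduced operator $\theta$ really does represent a nonzero vector of the composition factor $V_{{\rm alt}}$ or $V_{{\rm adj}}$, rather than landing in a trivial sub- or quotient-module of $\Lambda^2(W)$ or of the adjoint module; (ii) choosing $\theta$, over the very small field $\mathbb{F}_2$, so that the form it induces on $\operatorname{im}(\theta)$ genuinely differs from the restriction of the form fixed by $H_0$ --- this is possible precisely once $n$ exceeds a small threshold, which is exactly what delimits the computational range; and (iii) confirming that the geometric subgroup $M$ really is maximal in $H$ for the relevant $n$, together with a short check that the outer automorphisms of $H$ (which form a very small group for classical groups over $\mathbb{F}_2$) cause no difficulty. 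Granting these points the argument is uniform in $n$, and together with the computational treatment of the small cases it completes the proof.
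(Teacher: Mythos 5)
Your handling of $V=W$ is sound and matches the paper in substance: the paper disposes of this case by quoting \cite[Theorem 1.2(b)(i)]{MPS} for the linear and symplectic groups (where $G$ is $2$-transitive) and \cite[Theorem 1.3(c)]{MPS} for the orthogonal groups, and the unitary natural module is indeed excluded because $a=1$. (Your citation of \cite[Lemma 4.2]{MPS} is not the right reference -- in this paper that lemma is used for the fully deleted permutation module -- but the mathematics is correct.) For $V_{{\rm alt}}$ and for $V_{{\rm adj}}$ with $H_0={\rm L}_n^{\e}(2)$, however, the paper's proof is a one-line citation of \cite[Lemmas 4.5 and 4.6]{MPS}, which carry out the orbit and stabiliser analysis once and for all. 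Your proposal instead sketches a direct argument. The strategy -- exhibit a vector $\theta$ whose stabiliser is properly contained in a reducible subgroup, hence non-maximal -- is the right one, and it is essentially forced on you, since for instance $|V|<|H|$ when $V=V_{{\rm alt}}$ and so no regular orbit can exist; it is also essentially what the cited MPS lemmas do. But as written the argument stops short of a proof at exactly the three points you defer.

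Two of those deferred points are genuine gaps rather than routine checks. First, for $H_0={\rm Sp}_n(2)$ or $\O_n^{\e}(2)$ the module $V_{{\rm alt}}$ is a proper section of $\Lambda^2(W)$ (and $V_{{\rm adj}}$ is a proper section of the adjoint module when $n$ is even), so the stabiliser of the image of $\theta$ in $V$ can a priori be strictly larger than the stabiliser of $\theta$ itself: one must exclude elements $g$ with $g\cdot\theta=\theta+z$ for $z$ in the relevant trivial sub- or quotient-factor (e.g.\ the invariant form), which needs a rank comparison you do not supply. Second, the key assertion $H_\theta\subsetneq M$ is only verified implicitly for ${\rm L}_n(2)$, where it reduces to ${\rm Sp}_4(2)\subsetneq{\rm GL}_4(2)$ inside a maximal parabolic; for the symplectic and orthogonal groups the relevant reducible subgroup $M$ is a (non)singular-subspace stabiliser and the interaction between the form induced by $\theta$ and the form preserved by $H_0$ must actually be controlled over $\mathbb{F}_2$ before non-maximality follows. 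You would also need to confirm that the small-rank cases where the construction fails or the stabiliser is maximal (e.g.\ ${\rm L}_4(2)$ on $\Lambda^2(W)\cong\mathbb{F}_2^6$, which \emph{is} extremely primitive and appears in part (ii)(d) as $\O_6^+(2)$ on its natural module) are exactly those listed in Theorem \ref{t:main}. In short, your proposal is a reasonable plan for reproving \cite[Lemmas 4.5 and 4.6]{MPS} from scratch, but it does not yet constitute a proof, whereas the paper simply invokes those lemmas.
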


\begin{proof}
First assume $V = W$, so $\l = \l_1$, $d = n$ and $H_0 \ne {\rm U}_{n}(2)$. If $H_0 = {\rm L}_{n}(2)$ or ${\rm Sp}_{n}(2)$ then $H = H_0$ (because the highest weight $\l$ is not fixed by an involutory graph automorphism of $H_0$) and $G = V{:}H$ is $2$-transitive and extremely primitive (see \cite[Theorem 1.2(b)(i)]{MPS}). Similarly, if $H_0 = \O_{n}^{\pm}(2)$, then $H = H_0$ or $H_0.2 = {\rm O}_{n}^{\pm}(2)$ and $G$ is extremely primitive but not $2$-transitive (see \cite[Theorem 1.3(c)]{MPS}). 
Next assume $V = V_{{\rm alt}}$, so $\l = \l_2$. This case is handled in \cite[Lemma 4.5]{MPS} and the result follows. Similarly, if $H_0 = {\rm L}_{n}^{\e}(2)$ and $V = V_{{\rm adj}}$ then $\l = \l_{1}+\l_{n-1}$ and we apply \cite[Lemma 4.6]{MPS}. 
\end{proof}

\begin{rem}
In Proposition \ref{p:special}, the same conclusion holds if the given module $V$ is replaced by its dual $V^*$.
\end{rem}

\begin{prop}\label{p:linear}
The conclusion to Theorem \ref{t:main} holds if $H_0 = {\rm L}_{n}(2)$.
\end{prop}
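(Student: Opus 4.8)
The plan is to apply the strategy outlined in Section~\ref{s:intro}, combining the reductions already established with a special case of Lee's work on regular orbits of linear groups in the defining characteristic. First I would record that, since $H_0={\rm L}_n(2)$, Propositions~\ref{p:special} and the subsequent remark allow us to assume that $V$ is not the natural module $W$, its dual $W^*$, the alternating square factor $V_{{\rm alt}}$, or the adjoint module factor $V_{{\rm adj}}$; in particular the $2$-restricted highest weight $\l=\sum_i c_i\l_i$ is not $\l_1$, $\l_{n-1}$, $\l_2$, $\l_{n-2}$, or $\l_1+\l_{n-1}$. By Theorem~\ref{t:wall} and Lemma~\ref{l:socle}, it then suffices to prove that either $|\mathcal{M}(H)|\leqs|H|$, or $H_0$ has a regular orbit on $V$. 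For small $n$ the bound $|\mathcal{M}(H)|\leqs|H|$ can be checked directly in {\sc Magma}, so we may assume $n$ is larger than some explicit threshold.

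The main engine is \cite{L2}: Lee classifies the pairs $(H_0,V)$ with $H_0$ a finite simple group of Lie type over $\mathbb{F}_q$ acting absolutely irreducibly on a module $V=L(\l)$ in the defining characteristic such that $H_0$ (or the relevant quasisimple cover) has \emph{no} regular orbit on $V$. I would invoke the ${\rm L}_n(2)$-case of that classification: apart from a short, explicit list of low-dimensional modules (essentially the natural module and its dual, the alternating and symmetric square factors, the adjoint factor, and a few sporadic small cases), $H_0={\rm L}_n(2)$ always has a regular orbit on $L(\l)$. Having already excluded the natural, dual, alternating-square and adjoint modules via Proposition~\ref{p:special}, the remaining modules on Lee's list are either ruled out because they are not $2$-restricted or not realisable over $\mathbb{F}_2$, or they occur only for bounded $n$ and are dispatched together with the small-$n$ cases by computing $|\mathcal{M}(H)|$ and applying Theorem~\ref{t:wall}. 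In every surviving case $H_0$ has a regular orbit on $V'=V$, so Lemma~\ref{l:socle} gives that $G$ is not extremely primitive, and the proposition follows.

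For the cases not already covered by a direct appeal to \cite{L2} --- for instance if one prefers a self-contained dimension count for large $n$ --- I would fall back on the counting inequality \eqref{e:2d}. Here one uses $|H|\leqs|{\rm GL}_n(2)|\cdot|{\rm Out}({\rm L}_n(2))|<2^{n^2}$ together with the Guralnick--Saxl bound $\a(x)\leqs O(1)$ (in fact a constant for ${\rm L}_n(2)$, by \cite[Theorem 5.1]{GS}) and the fixed-space estimate \eqref{e:cvx}, to get
\[
\sum_{x\in\mathcal{P}}|C_V(x)| < 2^{n^2}\cdot 2^{(1-1/c)d},
\]
where $c$ is the relevant constant; this is smaller than $2^d$ once $d>cn^2/(c-1)$. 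Since the dimension of $L(\l)$ grows at least exponentially in $n$ whenever $\l$ is not one of the handful of small weights already excluded (this can be read off from \cite{Lub}, noting $d\neq n,\binom{n}{2},n^2-1,\ldots$), the inequality $d>cn^2/(c-1)$ holds for all $n$ above an explicit bound, leaving only finitely many $(n,V)$ to handle by machine.

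The main obstacle is making the reduction to Lee's classification airtight: one must be careful that the hypotheses of \cite{L2} match the present situation (absolute irreducibility of $H_0$ on $V'$ over $\mathbb{F}_2$, $2$-restricted highest weight, and the right quasisimple cover), and that every entry on Lee's exceptional list for ${\rm L}_n(2)$ has either already been disposed of by Proposition~\ref{p:special} or can be finished by a finite {\sc Magma} computation via Theorem~\ref{t:wall}. The dimension count is then routine; the bookkeeping of the small cases and the cross-checking against \cite{Lub} for the list of low-dimensional modules is the only genuinely laborious part.
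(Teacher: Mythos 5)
Your proposal follows essentially the same route as the paper: invoke Lee's classification in \cite{L2} (the paper uses \cite[Corollary 1.4]{L2} and \cite[Table 1.1]{L2}), strip out the modules already handled by Proposition \ref{p:special} and the non-$2$-restricted weights, and observe that the surviving exceptions ($\l=\l_3$ with $6\leqs n\leqs 9$, or $\l=\l_4$ with $n=8$) all have $n\leqs 9$, where $|\mathcal{M}(H)|\leqs|H|$ is checked by machine and Theorem \ref{t:wall} applies. The auxiliary counting argument via \eqref{e:2d} is not needed in the paper, but the core of your argument is correct and matches it.
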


\begin{proof}
We apply \cite[Corollary 1.4]{L2}, which implies that either $H$ has a regular orbit on $V = L(\l)$, or $(n,\l)$ is one of the cases appearing in \cite[Table 1.1]{L2} (up to duals). By inspecting the table, excluding the cases where $\l$ is not $2$-restricted or $\l \in \{\l_1, \l_2, \l_1+\l_{n-1}\}$ (in view of Proposition \ref{p:special}), we deduce that either $\l = \l_3$ and $n \in \{6,7,8,9\}$ or $\l = \l_4$ and $n=8$. But for $n \leqs 9$ it is easy to check that $|\mathcal{M}(H)| \leqs |H|$ and the result follows.
\end{proof}

Next we turn to the unitary groups with $H_0 = {\rm U}_n(2)$. Here the following lemma will be useful.

\begin{lem}\label{l:symmetric}
Let $V = L(\l)$ be a nontrivial $2$-restricted irreducible $\mathbb{F}_2H_0$-module, where $H_0 = {\rm U}_{\ell+1}(2)$ and $\ell \geqs 3$. Then one of the following holds:
\begin{itemize}\addtolength{\itemsep}{0.2\baselineskip}
\item[{\rm (i)}] $\l = \l_1+\l_{\ell}$ and $V = V_{{\rm adj}}$;
\item[{\rm (ii)}] $\ell$ is odd, $\l = \l_{(\ell+1)/2}$ and $\dim V = \binom{l+1}{(l+1)/2}$;
\item[{\rm (iii)}] $\dim V \geqs \frac{1}{4}\ell(\ell^2-1)(\ell-2)$.
\end{itemize}  
\end{lem}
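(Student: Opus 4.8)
The plan is to parametrise the highest weight $\l$ by a subset of the nodes and then bound $\dim V = \dim L(\l)$ from below by the number of weights of $L(\l)$. Write $\l = \sum_{i \in J}\l_i$, where $\emptyset \ne J \subseteq \{1,\ldots,\ell\}$; the hypothesis that $\l$ is fixed by the relevant graph automorphism (recorded just before the lemma) says precisely that $i \in J$ if and only if $\ell+1-i \in J$, and it also guarantees that $L(\l)$ is self-dual, so that $\dim_{\mathbb{F}_2} V = \dim L(\l)$ coincides with the dimension of the corresponding irreducible module for the algebraic group ${\rm SL}_{\ell+1}$. If $|J| = 1$ then symmetry forces $\ell$ to be odd and $J = \{(\ell+1)/2\}$, so $V$ is the exterior power $\Lambda^{(\ell+1)/2}W$, which is irreducible of dimension $\binom{\ell+1}{(\ell+1)/2}$; this is case (ii). If $J = \{1,\ell\}$ then $\l$ is the highest root and $V = V_{{\rm adj}}$; this is case (i). So I may assume $|J| \geqs 2$ and $J \ne \{1,\ell\}$, and the task is to prove $\dim L(\l) \geqs B$, where $B := \tfrac14\ell(\ell^2-1)(\ell-2) = \binom{\ell+1}{2}\binom{\ell-1}{2}$.

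The basic tool is that $\dim L(\l)$ is at least the number of distinct weights of $L(\l)$, hence at least the size of the Weyl group orbit of $\l$. Identifying $\l$ with the partition of $\ell+1$ it determines, this orbit size is the multinomial coefficient $\binom{\ell+1}{b_1,\ldots,b_r}$, where $(b_1,\ldots,b_r)$ is the palindromic composition of $\ell+1$ whose parts are the successive gaps of $J$ (so $r = |J|+1$); for example $J = \{2,\ell-1\}$ yields exactly $\binom{\ell+1}{2}\binom{\ell-1}{2} = B$. I would then carry out a short combinatorial estimate — using that refining a composition strictly increases its multinomial coefficient, together with the unimodality of the map $j \mapsto \binom{\ell+1}{j,\,\ell+1-2j,\,j}$ — to conclude that $\binom{\ell+1}{b_1,\ldots,b_r} \geqs B$ for every admissible $J$, with the sole exceptions $(\ell,\l) = (6,\l_3+\l_4)$ and $(8,\l_4+\l_5)$, where the orbit bound falls just short of $B$.

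To settle these two remaining cases I would invoke Premet's theorem: since every prime is good for $A_\ell$, the weights of $L(\l)$ coincide with those of the Weyl module $V(\l)$, so the weight set of $L(\l)$ contains the Weyl group orbit of every dominant weight $\mu$ with $\mu \preceq \l$. In both exceptional cases a direct comparison of partial sums of the associated partitions gives $\l_2+\l_{\ell-1} \preceq \l$, and the orbit of $\l_2+\l_{\ell-1}$ has size $B$, so $\dim L(\l) \geqs B$ once more. (Alternatively, for these two explicit modules one can read off $\dim L(\l_3+\l_4)$ for ${\rm U}_7(2)$ and $\dim L(\l_4+\l_5)$ for ${\rm U}_9(2)$ from the tables in \cite{Lub}.)

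The main obstacle is the combinatorial estimate in the second paragraph: the orbit bound equals $B$ precisely when $\l = \l_2 + \l_{\ell-1}$, so there is essentially no slack, and one has to be careful with small values of $\ell$ (down to $\ell = 3$), as well as with the bookkeeping relating the subset $J$, the partition of $\ell+1$ it defines, and the corresponding composition.
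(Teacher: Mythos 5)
Your argument is correct and rests on the same key idea as the paper's own proof: bounding $\dim V$ from below by the Weyl orbit size $|\mathcal{W}:\mathcal{W}_{\l}|$, computed as a multinomial coefficient which, over the admissible symmetric weights outside cases (i) and (ii), is minimised at $\l = \l_2+\l_{\ell-1}$ where it equals $\frac{1}{4}\ell(\ell^2-1)(\ell-2)$. The only divergence is at small rank: the paper disposes of $\ell \leqs 8$ by inspecting L\"ubeck's tables and runs the orbit argument only for $\ell \geqs 9$, whereas you run it for all $\ell \geqs 3$ and repair the two genuine shortfall cases $(\ell,\l)=(6,\l_3+\l_4)$ and $(8,\l_4+\l_5)$ via Premet's theorem (correctly applied, since all primes are good in type $A$ and $\l_2+\l_{\ell-1}\preceq\l$ in both cases); both routes work, and your identification of exactly those two exceptions is accurate.
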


\begin{proof}
Write $\l = \sum_ic_i\l_i$ and recall that $c_i = c_{\ell+1-i}$ for all $1 \leqs i \leqs \lfloor \ell/2\rfloor$. If $\ell \leqs 8$ then the result is easily checked by inspecting the relevant tables in \cite{Lub}, so for the remainder we may assume $\ell \geqs 9$.

Let $\{\a_1, \ldots, \a_{\ell}\}$ be a set of simple roots in the corresponding root system of type $A_{\ell}$, labelled in the same way as the fundamental dominant weights $\{\l_1, \ldots, \l_{\ell}\}$. Let $\mathcal{W} = {\rm Sym}_{\ell+1}$ be the Weyl group and let $\mathcal{W}_{\l}$ be the stabiliser of the highest weight $\l$ with respect to the usual action of $\mathcal{W}$ on the set of weights of $V$. Then $\mathcal{W}_{\l}$ is the parabolic subgroup of $\mathcal{W}$ generated by the reflections along the simple roots $\a_i$ such that $c_i = 0$ and we have
\begin{equation}\label{e:weights}
\dim V \geqs |\mathcal{W} \cdot \l| = |\mathcal{W}:\mathcal{W}_{\l}|
\end{equation}
since $V$ has at least $|\mathcal{W}\cdot \l|$ distinct weight spaces.

Suppose $\ell \geqs 9$ and $\l \ne \l_1+\l_{\ell}, \l_{(\ell+1)/2}$, so there exists an integer $2 \leqs k < (\ell+1)/2$ with $c_k  = c_{\ell+1-k} \ne 0$. Then $\mathcal{W}_{\l} \leqs {\rm Sym}_k \times {\rm Sym}_{\ell-2k+1} \times {\rm Sym}_k$, whence the bound in \eqref{e:weights} yields
\[
\dim V \geqs \frac{(\ell+1)!}{k!k!(\ell-2k+1)!} 
\]
and it is straightforward to show that this lower bound is minimal when $k=2$. This gives the lower bound in (iii) and the proof of the lemma is complete.
\end{proof}

\begin{prop}\label{p:unitary}
The conclusion to Theorem \ref{t:main} holds if $H_0 = {\rm U}_{n}(2)$. 
\end{prop}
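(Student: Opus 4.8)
The plan is to follow the now-familiar strategy: combine Theorem \ref{t:wall} and Lemma \ref{l:socle} with a regular-orbit argument based on the inequality \eqref{e:2d}, using Lemma \ref{l:symmetric} to cut down the list of highest weights to a small number of low-dimensional modules. First I would record that by Proposition \ref{p:special} (and the subsequent Remark) we may assume $V \neq W, W^*, V_{\rm alt}$, and $V \neq V_{\rm adj}$ when $H_0 = {\rm L}_n^{\e}(2)$; in particular, with $H_0 = {\rm U}_{\ell+1}(2)$ we may exclude $\l \in \{\l_1, \l_{\ell}, \l_2, \l_1 + \l_{\ell}\}$. I would also dispose of the small rank cases $\ell \leqs 2$ (i.e. $H_0 = {\rm U}_3(2)$, which is soluble, already covered by part (ii)(a), or not quasisimple) and a short list of further small $\ell$ directly, checking $|\mathcal{M}(H)| \leqs |H|$ in \textsc{Magma} and invoking Theorem \ref{t:wall}; this lets me assume $\ell \geqs 3$, and in fact $\ell$ reasonably large for the asymptotic estimate below.

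Next, for $\ell \geqs 3$ not among the finitely many small exceptions, Lemma \ref{l:symmetric} leaves only case (ii), namely $\ell$ odd and $\l = \l_{(\ell+1)/2}$ with $\dim V = \binom{\ell+1}{(\ell+1)/2}$, or case (iii), where $\dim V \geqs \tfrac14 \ell(\ell^2-1)(\ell-2)$. I would treat case (iii) by the counting bound \eqref{e:2d}: using $|H| \leqs |{\rm Aut}({\rm U}_{\ell+1}(2))| < 2^{(\ell+1)^2 + 3}$ (a crude bound suffices), the Guralnick--Saxl bound $\a(x) \leqs c$ for all $x \in \mathcal{P}$ with a fixed small constant $c$ (giving $\dim C_V(x) \leqs (1 - 1/c) d$ via \eqref{e:cvx}), and $d = \dim V \geqs \tfrac14 \ell(\ell^2-1)(\ell-2)$, which grows cubically in $\ell$ while $\log_2 |H|$ grows only quadratically. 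Hence the right-hand side of \eqref{e:2d} is at most roughly $2^{(\ell+1)^2+3 + (1-1/c)d}$, which is less than $2^d$ once $d/c$ exceeds $(\ell+1)^2 + 3$; since $d$ is cubic in $\ell$, this holds for all $\ell$ above an explicit (small) bound, so $H$ has a regular orbit on $V$ and, by Lemma \ref{l:socle}, $G$ is not extremely primitive. The finitely many values of $\ell$ below that bound are handled either by the same computation with exact class data in \textsc{Magma}, or by verifying $|\mathcal{M}(H)| \leqs |H|$ and applying Theorem \ref{t:wall}.

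The remaining family is case (ii): $\ell$ odd, $\l = \l_{(\ell+1)/2}$, $\dim V = \binom{\ell+1}{(\ell+1)/2}$. Here $d$ again grows faster than any polynomial in $\ell$ (it is exponential in $\ell$), so the same \eqref{e:2d} estimate, with the Guralnick--Saxl bound on $\a(x)$ and $\log_2|H| < (\ell+1)^2 + 3$, shows $H$ has a regular orbit on $V$ for all but small $\ell$; the small cases (a few odd values of $\ell$) are checked computationally, using \eqref{e:2d} directly with the character table of $H$ acting on $V$, or by Theorem \ref{t:wall} after computing $|\mathcal{M}(H)|$. The main obstacle I anticipate is the bookkeeping at the boundary: for the handful of intermediate-sized groups $H_0 = {\rm U}_{\ell+1}(2)$ (say $\ell$ roughly $9$ to $15$) the crude asymptotic bound may not yet apply and the maximal subgroup structure may not be available in a convenient reference, so one must push the \eqref{e:2d} calculation through by hand with reasonably sharp information on conjugacy class sizes and on $\dim C_V(x)$ — possibly sharpening \eqref{e:cvx} for specific unipotent or semisimple elements, or invoking the Guralnick--Lawther bounds on fixed-space dimensions at the algebraic-group level, exactly as flagged in the introduction. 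Once those finitely many cases are cleared, no new extremely primitive examples arise, completing the proof.
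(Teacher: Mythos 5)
Your overall route is the same as the paper's: dispose of small ranks by verifying $|\mathcal{M}(H)|\leqs |H|$ and quoting Theorem \ref{t:wall} (the paper does this for $\ell\leqs 7$), use Lemma \ref{l:symmetric} together with Proposition \ref{p:special} to reduce to case (ii) or (iii) of that lemma, kill case (iii) and large-rank instances of case (ii) via the counting inequality \eqref{e:2d}, and clean up the remaining boundary cases (the paper is left with $\ell\in\{9,11\}$, $\l=\l_{(\ell+1)/2}$, which it settles by reading off the maximal subgroups of ${\rm U}_{10}(2)$ and ${\rm U}_{12}(2)$ from the Bray--Holt--Roney-Dougal tables and applying Theorem \ref{t:wall} --- so your worry about references for the intermediate groups does not materialise).

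There is, however, one concrete error in your estimate: you invoke ``the Guralnick--Saxl bound $\a(x)\leqs c$ for all $x\in\mathcal{P}$ with a fixed small constant $c$.'' No such constant bound holds for classical groups of unbounded rank --- a transvection in ${\rm U}_{\ell+1}(2)$ needs on the order of $\ell$ conjugates to generate the group, and the relevant result (\cite[Theorem 4.2]{GS}) gives $\a(x)\leqs\ell+1$. Consequently your threshold ``the right-hand side is less than $2^d$ once $d/c$ exceeds $(\ell+1)^2+3$'' is wrong: with the correct bound one needs roughly $d>(\ell+1)^3$, not $d\gg(\ell+1)^2$. The argument survives this correction, but only because the dimension bounds are strong enough: in case (iii) of Lemma \ref{l:symmetric} one has $d\geqs\frac{1}{4}\ell(\ell^2-1)(\ell-2)$, which is quartic in $\ell$ and exceeds $(\ell+1)^3$ already for $\ell\geqs 8$, and in case (ii) the dimension $\binom{\ell+1}{(\ell+1)/2}$ is exponential and beats $(\ell+1)^3$ for $\ell\geqs 13$. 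You should restate the estimate with $\a(x)\leqs\ell+1$ and verify these crossover points explicitly; as written, the justification of the key inequality does not stand.
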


\begin{proof}
Write $n = \ell+1$ and recall that the highest weight $\l$ is invariant under the graph automorphism of the corresponding Dynkin diagram of type $A_{\ell}$. For $\ell \leqs 7$, it is easy to verify the bound $|\mathcal{M}(H)| \leqs |H|$ using {\sc Magma}, so in view of Theorem \ref{t:wall}, we may assume that $\ell \geqs 8$. Note that $|H|\leqs |{\rm Aut}(H_0)|<2^{(\ell+1)^2}$.

Define $\mathcal{P}$ and $\a(x)$ as in Section \ref{s:intro}. By \cite[Theorem 4.2]{GS} we have $\a(x) \leqs \ell+1$ for all $x \in \mathcal{P}$, so by appealing to \eqref{e:2d} we deduce that $H$ has a regular orbit on $V$ if 
\begin{equation}\label{e:uni}
2^d > 2^{(\ell+1)^2+\left(1-\frac{1}{\ell+1}\right)d}
\end{equation}
It is easy to verify that this inequality holds if $d \geqs \frac{1}{4}\ell(\ell^2-1)(\ell-2)$, so by combining Proposition \ref{p:special} and Lemma \ref{l:symmetric}, it remains to consider the case where $\ell = 2m-1$ is odd, $\l = \l_m$ and $d = \binom{\ell+1}{m}$. Here one checks that \eqref{e:uni} holds if $\ell \geqs 13$ so we may assume $\ell \in \{9,11\}$. If $\ell=9$ then the maximal subgroups of $H$ are recorded in \cite[Tables 8.62 and 8.63]{BHR} (up to conjugacy) and it is straightforward to check that $|\mathcal{M}(H)| \leqs |H|$. Similarly, we can appeal to \cite[Tables 8.78 and 8.79]{BHR} when $\ell=11$. In both cases, we now apply Theorem \ref{t:wall}.
\end{proof}

\begin{prop}\label{p:symplectic}
The conclusion to Theorem \ref{t:main} holds if $H_0 = {\rm Sp}_{n}(2)$.
\end{prop}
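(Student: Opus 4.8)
The plan is to follow the template established in the proof of Proposition~\ref{p:unitary}. Write $n = 2\ell$; since ${\rm Sp}_{2\ell}(2)$ must be simple we have $\ell \geqs 3$, so $H = H_0 = {\rm Sp}_{2\ell}(2)$ and $|H| < 2^{2\ell^2+\ell}$. For $\ell \leqs 6$ the natural module has dimension at most $12$, so the maximal subgroups of $H$ are available in \cite{BHR} and one checks directly that $|\mathcal{M}(H)| \leqs |H|$; by Theorem~\ref{t:wall} we may therefore assume $\ell \geqs 7$. By Proposition~\ref{p:special} and the following remark we may also assume $\l \notin \{\l_1,\l_2\}$.

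First I would record a lemma for type $C_\ell$ in the spirit of Lemma~\ref{l:symmetric}. Writing $\l = \sum_i c_i\l_i$ with each $c_i \in \{0,1\}$, the Weyl group $\mathcal{W}$ of type $C_\ell$ has order $2^\ell\ell!$, the stabiliser $\mathcal{W}_{\l}$ is the parabolic subgroup generated by the reflections $s_i$ with $c_i = 0$, and $\dim V \geqs |\mathcal{W}:\mathcal{W}_{\l}|$ exactly as in \eqref{e:weights}. Running through the ways of deleting nodes from the $C_\ell$ diagram, this should show that either $\l$ lies in a short explicit list --- the essential survivors being $\l_3$ and $\l_\ell$, together with a bounded number of further weights of small support such as $\l_1+\l_2$ --- or $\dim V$ exceeds a cubic lower bound in $\ell$.

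For modules above this threshold the conclusion is immediate. By \cite[Theorem 4.2]{GS} we have $\a(x) \leqs \ell+2$ for every element $x$ of prime order in $H$, so \eqref{e:cvx} and \eqref{e:2d} give
\[
\sum_{x \in \mathcal{P}}|C_V(x)| < |H|\cdot 2^{\left(1-\frac{1}{\ell+2}\right)d} < 2^{\,2\ell^2+\ell+\left(1-\frac{1}{\ell+2}\right)d},
\]
which is less than $|V| = 2^d$ once $d > (\ell+2)(2\ell^2+\ell)$, in particular once $d$ exceeds the cubic bound above; hence $H$ has a regular orbit on $V$ and $G$ is not extremely primitive. It then remains to treat the short list of low-dimensional modules. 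The module $L(\l_\ell)$ has dimension $2^\ell$, and for ${\rm Sp}_6(2)$ this is the $8$-dimensional module giving the example $(8,{\rm Sp}_6(2))$ in part~(ii)(d) of Theorem~\ref{t:main}; for $\ell \geqs 7$ the displayed bound already forces a regular orbit as soon as $(\ell+2)(2\ell^2+\ell) < 2^\ell$, and the finitely many intermediate ranks can be settled by verifying $|\mathcal{M}(H)| \leqs |H|$ and quoting Theorem~\ref{t:wall}. The remaining cases --- chiefly $L(\l_3)$, whose dimension is of order $\ell^3$ and therefore sits essentially on the boundary of what \eqref{e:cvx} can provide --- require sharper input: I would improve the bound on $\dim C_V(x)$ either by computing the fixed spaces of transvections (and other elements of small support) directly inside the module, or by invoking the algebraic-group fixed-space estimates of Guralnick and Lawther \cite{GL} together with the explicit module data in \cite{Lub}, reducing once more to a bounded range of ranks that is dispatched via Theorem~\ref{t:wall}.

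The hard part will be precisely this final step. For the genuinely small modules the crude estimate $\dim C_V(x) \leqs (1-1/\a(x))\dim V$ combined with $\a(x) \leqs \ell+2$ is not quite enough --- on $L(\l_3)$ it just fails --- so one is forced to sharpen the fixed-space bounds, whether through \cite{GL} or by direct computation in the module, and then clear up a bounded family of small symplectic groups by hand. Everything else follows the now-familiar pattern.
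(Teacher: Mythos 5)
Your overall architecture matches the paper's (reduce small ranks via Theorem \ref{t:wall}, kill large modules with \eqref{e:2d} and \eqref{e:cvx}, then treat a short list of small highest weights with sharper fixed-space bounds), but there are two genuine gaps. First, the uniform bound $\a(x) \leqs \ell+2$ you extract from Guralnick--Saxl is false: the relevant statement is \cite[Theorem 4.3]{GS}, which gives $\a(x) \leqs \ell+3$ \emph{except} when $x$ is a transvection, where $\a(x) = 2\ell+1$. This exception is unavoidable for elementary reasons -- $k$ conjugate transvections fix a subspace of the natural module of codimension at most $k$, so no set of fewer than $2\ell$ of them can generate an irreducible group -- and so your displayed inequality is not valid as written. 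The fix is exactly the paper's two-term inequality: the $2^{2\ell}-1$ transvections contribute a separate summand $(2^{2\ell}-1)\cdot 2^{(1-\frac{1}{2\ell+1})d}$, which is harmless because there are so few of them. This is patchable, but the display you rely on for the generic reduction is wrong as stated.

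The second gap is not patchable by your proposed means. For the spin module $\l=\l_{\ell}$ with $d=2^{\ell}$, the generic inequality fails precisely for $9 \leqs \ell \leqs 12$, and you propose to settle these intermediate ranks by verifying $|\mathcal{M}(H)| \leqs |H|$ and quoting Theorem \ref{t:wall}. But for ${\rm Sp}_{18}(2), \ldots, {\rm Sp}_{24}(2)$ the maximal subgroups are not classified (\cite{BHR} stops at dimension $12$, and the paper's {\sc Magma} computation stops at $n=16$); verifying Wall's bound for these groups is exactly the obstruction the whole paper exists to circumvent, so this step cannot be carried out. The paper instead does the real work here: it computes $\dim C_V(x)$ exactly on the spin module for every prime-order $x$, by restricting to maximal-rank subgroups ${\rm Sp}_2(k)^{\ell}$ and ${\rm Sp}_4(k)^j \times {\rm Sp}_{2\ell-4j}(k)$ of the ambient algebraic group and reading off Jordan forms on tensor products, obtaining $\dim C_V(x) \leqs 2^{\ell-1}$ except for $a_{2j}$-type involutions, where $\dim C_V(x) = 2^{\ell-1}(1+2^{-j})$; a class-by-class count then closes these four ranks. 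This computation is the technical core of the proposition and is absent from your sketch. Your treatment of $\l_3$ and $\l_1+\l_2$ via \cite{GL} is the right idea (and is what the paper does), though it is left as a plan rather than an argument; and your Weyl-orbit lemma, while workable in spirit, only gives a quadratic lower bound for weights such as $\l_1+\l_2$, so the paper's route through \cite{Lub} and \cite[Theorem 1.2]{Martinez} is needed to pin down the surviving list cleanly.
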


\begin{proof}
If $n \leqs 16$ then we can use the {\sc Magma} function \texttt{ClassicalMaximals} to construct a complete set of representatives of the conjugacy classes of maximal subgroups of $H$ and this allows us to easily verify the bound $|\mathcal{M}(H)| \leqs |H|$. For the remainder, we may assume $n \geqs 18$, so $H = H_0$ and we may write $n = 2\ell$, where $\ell$ denotes the rank of $H$. Define $\mathcal{P}$ and $\a(x)$ as above. If $x \in \mathcal{P}$ then \cite[Theorem 4.3]{GS} implies that either $\a(x) \leqs \ell+3$, or $x$ is a transvection and $\a(x) = 2\ell+1$. Since $H$ contains exactly $2^{2\ell}-1$ transvections and $|H| < 2^{\ell(2\ell+1)}$, we deduce that $H$ has a regular orbit on $V$ if
\begin{equation}\label{e:symp1}
2^d > 2^{\ell(2\ell+1)+\left(1-\frac{1}{\ell+3}\right)d} +(2^{2\ell}-1)\cdot 2^{\left(1-\frac{1}{2\ell+1}\right)d}.
\end{equation}
One can check that this bound is satisfied if $\ell \geqs 9$ and $d \geqs 3\ell^3$. Therefore, we may assume $d < 3\ell^3$. 

If $\ell=9$ and $d<3\ell^3$ then by inspecting \cite{Lub}, excluding any cases handled in Proposition \ref{p:special}, we deduce that $\l \in \{\l_1+\l_2, \l_3, \l_{9}\}$. Now assume $\ell \geqs 10$. Here $3\ell^3<16\binom{\ell}{4}$ and we can use \cite[Theorem 1.2]{Martinez} to determine the possibilities for $V$. In this way, the problem is reduced to the cases where $\l \in \{\l_1+\l_2,\l_3\}$, or $\l = \l_{\ell}$ and $9 \leqs \ell \leqs 12$. We consider the three possibilities for $\l$ in turn. Define $\mathcal{P}$ as above.

\vs

\noindent \emph{Case 1. $\l = \l_{\ell}$, $9 \leqs \ell \leqs 12$.}

\vs

First assume $\l = \l_{\ell}$, so $V = L(\l)$ is the spin module for $H$ and $d = \dim V = 2^{\ell}$ (see \cite[p.195]{KL}). For $x \in \mathcal{P}$, we claim that $\dim C_V(x) = 2^{\ell-1}(1+2^{-j})$ if $x$ is an involution of type $a_{2j}$ (see \cite{AS} for the notation) and $\dim C_V(x) \leqs 2^{\ell-1}$ in all other cases. Since $j \leqs \lfloor \ell/2 \rfloor$ and $|x^H|<2^{4j(\ell-j)+1}$ if $x$ is an involution of type $a_{2j}$ (see \cite[Table 3.4.1]{BG}), we deduce that $H$ has a regular orbit on $V$ if
\[
2^d > 2^{\ell(2\ell+1)+d/2} + \sum_{j=1}^{\lfloor \ell/2 \rfloor}2^{4j(\ell-j)+1+d/2+2^{\ell-j-1}}.
\]
It is easy to check that this inequality is satisfied for all $9 \leqs \ell \leqs 12$. Therefore, to complete the argument in this case it just remains to establish the above claim on $\dim C_V(x)$.

To do this, it will be convenient to work with the ambient simple algebraic group $\bar{H} = {\rm Sp}_{2\ell}(k)$, where $k$ is the algebraic closure of $\mathbb{F}_2$. Set $\bar{V} = V \otimes k$ and consider a connected maximal rank subgroup $\bar{A} = {\rm Sp}_{2}(k)^{\ell}$ of $\bar{H}$ (so $\bar{A}$ is the connected component of the stabiliser in $\bar{H}$ of an orthogonal decomposition of the natural $k\bar{H}$-module into $\ell$ two-dimensional nondegenerate subspaces). Then the restriction of $\bar{V}$ to $\bar{A}$ is irreducible and it is given by the tensor product of the natural modules of the ${\rm Sp}_{2}(k)$ factors. Let $x \in H$ be an element of prime order. If $x$ is semisimple then $x$ is $\bar{H}$-conjugate to an element of $\bar{A}$ (this is clear since $\bar{A}$ contains a maximal torus of $\bar{H}$) and thus $\dim C_V(x) = \dim C_{\bar{V}}(x) \leqs d/2$ by \cite[Lemma 3.7]{LSh}. Similarly, if $x$ is an involution of type $b$ or $c$, then some $\bar{H}$-conjugate of $x$ is contained in $\bar{A}$ and the same conclusion holds. 

Finally, let us assume $x$ is an involution of type $a_{2j}$, where $1 \leqs j \leqs \lfloor \ell/2 \rfloor$. 
These elements require special attention because no $\bar{H}$-conjugate of $x$ is contained in $\bar{A}$. Let $\bar{B}$ be a maximal rank subgroup of $\bar{H}$ of the form ${\rm Sp}_{4}(k)^j \times {\rm Sp}_{2\ell-4j}(k)$. By replacing $x$ by a suitable conjugate, we may assume $x = (x_1, \ldots, x_j, y) \in \bar{B}$, where each $x_i \in {\rm Sp}_{4}(k)$ is an $a_2$-type involution and $y \in {\rm Sp}_{2\ell-4j}(k)$ is the identity element. The restriction of $\bar{V}$ to $\bar{B}$ is irreducible, given by the tensor product of $j$ spin modules for ${\rm Sp}_{4}(k)$, together with the spin module for ${\rm Sp}_{2\ell-4j}(k)$ if $j \ne \ell/2$ (recall that if $\{\rho_1,\ldots, \rho_m\}$ is a set of fundamental dominant weights for ${\rm Sp}_{2m}(k)$, labelled in the usual manner, then the spin module has highest weight $\rho_m$). Each $x_i$ has Jordan form $(J_2,J_1^2)$ on the spin module for ${\rm Sp}_{4}(k)$, where $J_i$ denotes a standard unipotent Jordan block of size $i$ (note that the spin module is a twist of the natural $4$-dimensional module for ${\rm Sp}_4(k)$ by a graph automorphism, which interchanges long and short root elements). Therefore, $x$ has Jordan form
\[
(J_2,J_1^2) \otimes \cdots \otimes (J_2,J_1^2) \otimes (J_1^{2^{\ell-2j}}) = (J_2^{2^{\ell-1}(1-2^{-j})}, J_1^{2^{\ell-j}})
\]
on $\bar{V}$ and thus $\dim C_{V}(x) = 2^{\ell-1}(1+2^{-j})$ as claimed.

\vs

\noindent \emph{Case 2. $\l = \l_1+\l_2$, $\ell \geqs 9$.}

\vs

Here $d = \dim V = 16\binom{l+1}{3}$ (see \cite[Table 3]{Martinez}) and one checks that \eqref{e:symp1} holds if $\ell \geqs 12$. Therefore, we may assume $\ell \in \{9,10,11\}$. By combining the proof of \cite[Proposition 2.4.7]{GL} with \cite[Lemma 1.3.2]{GL} and \cite[Proposition 2.2.3]{GL}, we deduce that $\dim C_V(x) \leqs d- 4(\ell-1)^2$ for all $x \in \mathcal{P}$. Therefore, $H$ has a regular orbit on $V$ if 
\[
2^d>2^{\ell(2\ell+1)+d-4(\ell-1)^2}
\]
and it is clear that this is satisfied for each $\ell \in \{9,10,11\}$.

\vs

\noindent \emph{Case 3. $\l = \l_{3}$, $\ell \geqs 9$.}

\vs

Finally, let us assume $\l = \l_3$. Here $d = \binom{2\ell}{3}-2(1+\delta)\ell$, where $\delta = 1$ if $\ell$ is odd, otherwise $\delta = 0$, and the inequality in \eqref{e:symp1} fails to hold for all $\ell \geqs 9$. Let $x\in H$ be of prime order $r$ and set $\bar{H} = {\rm Sp}_{2\ell}(k)$ as above. By examining the proof of \cite[Proposition 2.5.22]{GL} and applying \cite[Lemma 1.3.2]{GL}, we deduce that
\begin{equation}\label{e:short}
\dim C_V(x)\leqs d-4\ell^2+16\ell-14
\end{equation}
if $r \ne 2$, or if $r=2$ and the Zariski closure of $x^{\bar{H}}$ contains a short root element (that is, an element of type $a_2$ in the notation of \cite{AS}). By considering the closure relation on unipotent classes in $\bar{H}$ (see \cite{Spal}, for example), we see that if $x$ is an involution then the closure of $x^{\bar{H}}$ contains a short root element unless $x$ is a long root element (that is, unless $x$ is a transvection). In this exceptional case, the proof of \cite[Proposition 2.5.22]{GL} gives 
\begin{equation}\label{e:long}
\dim C_V(x) \leqs d- 2\ell^2+5\ell-1
\end{equation}
and we noted above that $H$ contains precisely $2^{2\ell}-1$ transvections. Therefore, $H$ has a regular orbit on $V$ if 
\[
2^d > 2^{\ell(2\ell+1)+d-4\ell^2+16\ell-14}+(2^{2\ell}-1) \cdot 2^{d-2\ell^2+5\ell-1}
\]
and it is straightforward to check that this bound holds for all $\ell\geqs 9$.
\end{proof}

\begin{prop}\label{p:orthog}
The conclusion to Theorem \ref{t:main} holds if $H_0 = \O_{n}^{\e}(2)$.
\end{prop}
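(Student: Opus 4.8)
The plan is to adapt the strategy used in the proof of Proposition \ref{p:symplectic}. Since $\O_{2\ell+1}(2) \cong {\rm Sp}_{2\ell}(2)$ was covered in Proposition \ref{p:symplectic}, we may assume $n = 2\ell$ is even, so $H_0 = \O_{2\ell}^{\e}(2)$ is of type $D_{\ell}$ (if $\e = +$) or ${}^2D_{\ell}$ (if $\e = -$), and we may take $\ell \geqs 4$ (the smaller cases being non-simple or already covered, for instance $\O_6^+(2) \cong {\rm L}_4(2)$ and $\O_6^-(2) \cong {\rm U}_4(2)$). First I would dispose of the small-rank cases: for $\ell$ up to an explicit bound (say $\ell \leqs 8$), the {\sc Magma} \cite{Magma} function \texttt{ClassicalMaximals} produces a complete set of representatives of the conjugacy classes of maximal subgroups of $H$, one verifies the bound $|\mathcal{M}(H)| \leqs |H|$ directly, and Theorem \ref{t:wall} applies; this also settles the triality-twisted modules for $H_0 = \O_8^+(2)$.

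For the remaining cases, define $\mathcal{P}$ and $\a(x)$ as in Section \ref{s:intro}. By \cite[Theorem 4.4]{GS} we have $\a(x) \leqs \ell + c$ for an absolute constant $c$ and all $x \in \mathcal{P} \cap H_0$, the only genuine exceptions being the reflections in $H \setminus H_0$ (which occur precisely when $H = {\rm O}_{2\ell}^{\e}(2)$), of which there are fewer than $2^{2\ell}$ and for which $\a(x)$ may be of order $2\ell$. Using $|H| < 2^{\ell(2\ell-1)+1}$ and feeding these bounds into \eqref{e:2d} via \eqref{e:cvx}, one finds that $H$ has a regular orbit on $V$ whenever $d \gg \ell^3$. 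Combining this with Proposition \ref{p:special} and the classification of the low-dimensional $2$-restricted irreducible modules for $H_0$ in \cite{Lub} (cf.\ \cite{Martinez}), the problem reduces to $V = L(\l_3)$ (for all $\ell$ in the remaining range), possibly $V = L(\l_1+\l_2)$ for a bounded range of $\ell$, and --- only when $\e = +$ and $H = H_0$, since the two half-spin modules of $D_{\ell}$ are interchanged by the graph automorphism --- the half-spin modules $V = L(\l_{\ell-1})$ or $L(\l_{\ell})$ of dimension $2^{\ell-1}$, which survive only for $\ell$ in a bounded range because $2^{\ell-1} \gg \ell^3$ for $\ell$ large.

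The modules $L(\l_3)$ and $L(\l_1+\l_2)$ should be handled exactly as in Cases 3 and 2 of the proof of Proposition \ref{p:symplectic}: the algebraic-group estimates of Guralnick and Lawther \cite{GL}, applied at the level of the ambient simple algebraic group $\bar{H}$ of type $D_{\ell}$ over the algebraic closure $k$ of $\mathbb{F}_2$, give $\dim C_V(x) \leqs d - c_1\ell^2$ for all $x \in \mathcal{P}$ that are not transvections, together with a weaker bound $\dim C_V(x) \leqs d - c_2\ell^2$ (with $c_2 < c_1$) for transvections; since $H$ contains fewer than $2^{2\ell}$ transvections, substituting these into \eqref{e:2d} forces a regular orbit for every $\ell$ beyond the range already treated by {\sc Magma}. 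For the half-spin modules I would argue as in Case 1 of Proposition \ref{p:symplectic}, passing to $\bar{H}$ and $\bar{V} = V \otimes k$ (here $H = H_0$, so no reflections arise): semisimple elements of prime order, and involutions of type $b$ or $c$, are conjugate into a connected maximal-rank subgroup of $\bar{H}$ on which $\bar{V}$ restricts with $\dim C_V(x) \leqs d/2 = 2^{\ell-2}$, while the involutions of type $a_{2j}$, which meet no such subgroup, must be placed in a maximal-rank subgroup of type $D_2^{\,j} \times D_{\ell-2j}$, on which $\bar{V}$ decomposes as a direct sum of tensor products of half-spin modules of the factors; computing the Jordan form of such an involution on $\bar{V}$ (or invoking \cite{GL}) then yields $\dim C_V(x) \leqs 2^{\ell-2}(1 + 2^{-j})$. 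Substituting these estimates into \eqref{e:2d} and checking the resulting numerical inequality for each of the finitely many remaining values of $\ell$ completes the argument, and hence the proof of Theorem \ref{t:main}.

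I expect the main obstacle to be this last step, the fixed-space analysis for the half-spin modules. The difficulty is threefold: choosing the correct maximal-rank subgroup so that the restriction of $\bar{V}$ is tractable --- the type $D_{\ell}$ situation is less clean than the type $C_{\ell}$ one treated in Proposition \ref{p:symplectic}, since $D_2 \cong A_1A_1$ and a half-spin module of $D_m$ restricts to a subgroup $D_a \times D_b$ (with $a+b=m$) as a direct sum, rather than a single tensor product, of half-spin modules of the factors; correctly matching the Aschbacher--Seitz involution types in $\O_{2\ell}^{\e}(2)$ with the unipotent classes of $\bar{H}$ and reading off the Jordan forms on $\bar{V}$; and keeping track of the differences between the $\e = +$ and $\e = -$ types, and between $H_0$ and $H_0.2 = {\rm O}_{2\ell}^{\e}(2)$, where the reflections in $H \setminus H_0$ require separate treatment on account of their large $\a$-values. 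A small number of genuinely sporadic cases (such as those for $\O_8^+(2)$ arising via triality) can simply be checked directly in {\sc Magma}.
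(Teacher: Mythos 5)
Your overall strategy coincides with the paper's: Magma for small rank, the Guralnick--Saxl bound $\a(x)\leqs \ell+3$ (with transvections in ${\rm O}_{2\ell}^{\e}(2)$ as the exceptional class) fed into \eqref{e:2d} and \eqref{e:cvx} to force $d \ll \ell^3$, reduction via \cite{Martinez} to $\l \in \{\l_1+\l_2,\l_3,\l_{\ell-1},\l_{\ell}\}$, and then \cite{GL} for $\l_3$ exactly as in Case 3 of Proposition \ref{p:symplectic} (for $\l_1+\l_2$ the paper finds the generic inequality \eqref{e:ort1} already suffices, so no Guralnick--Lawther input is needed there). The genuine divergence is in the half-spin modules, which you correctly identify as the crux. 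The paper sidesteps the difficulty you flag (the restriction of a half-spin module of $D_{\ell}$ to $D_a\times D_b$ being a \emph{sum} of two tensor products) for most classes: any $x$ of prime order fixing a nonsingular vector of $W$ lies in the finite subgroup $A={\rm Sp}_{2\ell-2}(2)$, to which $V$ restricts irreducibly as the spin module, so the fixed-space bounds from Case 1 of Proposition \ref{p:symplectic} are simply quoted; only the classes fixing no nonsingular vector (semisimple elements with trivial $1$-eigenspace on $W$, and $a_{\ell}$-involutions for $\ell$ even) require an algebraic-group computation, and for these a single subgroup ${\rm SO}_8(k)\times{\rm SO}_{2\ell-8}(k)$ suffices, yielding $\dim C_V(x)\leqs 5d/8$. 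Your proposed uniform treatment via $D_2^{\,j}\times D_{\ell-2j}$ for every $a_{2j}$-class is plausible but substantially messier (the number of summands grows with $j$), and your asserted bound $\dim C_V(x)\leqs d/2$ for \emph{all} semisimple elements is not justified by the method you sketch: for elements with no eigenvalue $1$ on $W$ the paper only establishes $5d/8$, by a separate argument comparing the two $8$-dimensional spin modules of ${\rm SO}_8(k)$. Since the final numerical check tolerates $5d/8$ for $\ell\geqs 10$ (and $\ell=9$ is already done in \cite{BT}), this does not break your argument, but the fixed-point analysis for the half-spin modules would need to be carried out in full before the proof is complete; the paper's reduction to the symplectic case via the stabiliser of a nonsingular vector is the cleaner way to do it.
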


\begin{proof}
Write $n = 2\ell$, where $\ell$ is the rank of $H_0$. For $\ell \leqs 8$ we can use {\sc Magma} to compute $|\mathcal{M}(H)|$, so we may assume $\ell \geqs 9$. Let $x \in \mathcal{P}$. Then  \cite[Theorem 4.4]{GS} states that either $\a(x) \leqs \ell+3$, or $H = {\rm O}_{n}^{\e}(2)$, $x$ is a transvection and $\a(x) = 2\ell$. There are precisely $2^{\ell-1}(2^{\ell}-\e)$ transvections in ${\rm O}_{n}^{\e}(2)$, whence $H$ has a regular orbit on $V$ if
\begin{equation}\label{e:ort1}
2^d > 2^{\ell(2\ell-1)+1+\left(1-\frac{1}{\ell+3}\right)d} +2^{\ell-1}(2^{\ell}+1)\cdot 2^{\left(1-\frac{1}{2\ell}\right)d}.
\end{equation}
One checks that this bound is satisfied if $\ell \geqs 9$ and $d \geqs \frac{8}{3}\ell^3$. Therefore, we may assume $d< \frac{8}{3}\ell^3$. Now for $\ell \geqs 9$ we have $\frac{8}{3}\ell^3<16\binom{\ell}{4}$ and so we can apply \cite[Theorem 1.2]{Martinez} to determine the possibilities for $V = L(\l)$. In this way, we reduce to the cases $\l \in \{ \l_1+\l_2, \l_3\}$, together with the spin modules $\l \in \{\l_{\ell-1},\l_{\ell}\}$ for $9 \leqs \ell \leqs 13$.

\vs

\noindent \emph{Case 1. $\l \in \{ \l_{\ell-1}, \l_{\ell}\}$, $9 \leqs \ell \leqs 13$.}

\vs

Here $V$ is a spin module and $d = \dim V = 2^{\ell-1}$. Note that $\e=+$ and $H = H_0$ because $\l$ is not invariant under the corresponding graph automorphism of the root system of type $D_{\ell}$. In particular, $H$ does not contain any transvections. The case $\ell=9$ is handled in \cite{BT} (see the proof of \cite[Proposition 3.4]{BT}), so we may assume $10 \leqs \ell \leqs 13$. Let $W$ be the natural module for $H$ and let $x \in H$ be an element of prime order. 

First assume $x$ fixes a nonsingular vector in $W$. In this situation, we may embed $x$ in a maximal subgroup $A = {\rm Sp}_{2\ell-2}(2)$ of $H$ (namely, the stabiliser in $H$ of a nonsingular vector in $W$). Let $\{\rho_1,\ldots, \rho_{\ell-1}\}$ be a set of fundamental dominant weights for $A$, labelled in the usual way. Then the restriction of $V$ to $A$ is irreducible with highest weight $\rho_{\ell-1}$, so we can appeal to the argument in Case 1 in the proof of Proposition \ref{p:symplectic}. In this way, we deduce that either $\dim C_V(x) \leqs 2^{\ell-2}$, or $x = a_{2j}$ is an $a$-type involution with $1 \leqs j \leqs \lceil \ell/2\rceil-1$ and $\dim C_V(x) = 2^{\ell-2}(1+2^{-j})$. In the latter case, let us also note that $|x^H|<2^{2j(2\ell-2j-1)+1}$ (see \cite[Table 3.5.1]{BG}).

Now suppose $x$ does not fix a nonsingular vector in $W$. Then either $x$ is a semisimple element with a trivial $1$-eigenspace on $W$, or $\ell$ is even and $x$ is ${\rm O}_{2\ell}^{+}(2)$-conjugate to an involution of type $a_{\ell}$. To handle these cases, let $\bar{H} = {\rm SO}_{2\ell}(k)$ be the ambient simple algebraic group, where $k$ is the algebraic closure of $\mathbb{F}_2$, and set $\bar{V} = V \otimes k$ and $\bar{W} = W \otimes k$. We first embed $x$ in a connected maximal rank subgroup $\bar{B} = {\rm SO}_8(k) \times {\rm SO}_{2\ell-8}(k)$ of $\bar{H}$, say $x = (x_1,x_2)$ with $x_1 \in {\rm SO}_8(k)$ and $x_2 \in {\rm SO}_{2\ell-8}(k)$. Then 
\[
\bar{V} \downarrow \bar{B} = (U_1 \otimes W_1) \oplus (U_2 \otimes W_2)
\]
describes the restriction of $\bar{V}$ to $\bar{B}$, where $U_1$ and $U_2$ are the two non-isomorphic $8$-dimensional spin modules for ${\rm SO}_8(k)$ and $W_1$, $W_2$ are the non-isomorphic spin modules for ${\rm SO}_{2\ell-8}(k)$ of dimension $2^{\ell-5}$. Set $V_i = U_i \otimes W_i$ and write $s_i$ for the codimension of the largest eigenspace of $x_1$ on $U_i$. We claim that $\dim C_V(x) \leqs 5d/8$.

To see this, first assume $x \in H$ is semisimple with a trivial $1$-eigenspace on $W$. Notice that either $s_i \geqs 4$, or $x_1$ has a $6$-dimensional $1$-eigenspace on $U_i$ and $s_i = 2$. Since $x_1$ has a $6$-dimensional eigenspace on at most one of the two spin modules for ${\rm SO}_8(k)$, we may assume that $s_1 \geqs 4$. Therefore, by applying \cite[Lemma 3.7]{LSh} we deduce that $\dim C_{V_1}(x) \leqs \dim V_1 - 2^{\ell-3}$ and $\dim C_{V_2}(x) \leqs \dim V_2 - 2^{\ell-4}$, whence
\[
\dim C_V(x) = \dim C_{\bar{V}}(x) = \dim C_{V_1}(x) + \dim C_{V_2}(x) \leqs \frac{1}{4}\dim V + \frac{3}{8}\dim V = \frac{5}{8}d
\]
as claimed. A very similar argument applies if $\ell$ is even and $x$ is an involution of type $a_{\ell}$. Here $x_1$ and $x_2$ are involutions of type $a_4$ and $a_{\ell-4}$, respectively, and we note that $x_1$ has Jordan form $(J_2^4)$ on one of the spin modules for ${\rm SO}_8(k)$ and $(J_2^2,J_1^4)$ on the other. Therefore, we may assume $s_1 = 4$, $s_2=2$ and the previous argument applies.

This justifies the claim. Since $|H_0|<2^{\ell(2\ell-1)}$, we conclude that $H$ has a regular orbit on $V$ if
\[
2^d > 2^{\ell(2\ell-1)+5d/8} + \sum_{j=1}^{\lceil \ell/2 \rceil-1}2^{2j(2\ell-2j-1)+1+d/2+2^{\ell-j-2}}.
\]
One checks that this inequality holds for all $\ell \geqs 10$, as required.

\vs

\noindent \emph{Case 2. $\l  = \l_{1}+ \l_{2}$, $\ell \geqs 9$.}

\vs

Here $d = 16\binom{l+1}{3}$ and one checks that the bound in \eqref{e:ort1} holds for all $\ell \geqs 9$. In particular, $H$ has a regular orbit on $V$ and thus $G$ is not extremely primitive.

\vs

\noindent \emph{Case 3. $\l = \l_3$, $\ell \geqs 9$.}

\vs

Finally, suppose $\l = \l_3$ so $d = \binom{2\ell}{3}-2(1+\delta)\ell$, where $\delta = 1$ if $\ell$ is odd, otherwise $\delta = 0$. The inequality in \eqref{e:symp1} fails to hold for all $\ell \geqs 9$. 

Let $x \in H$ be an element of prime order. Define $\bar{H} = {\rm O}_{2\ell}(k)$, where $k$ is the algebraic closure of $\mathbb{F}_2$, and consider the usual embedding of $\bar{H}$ in the symplectic group $\bar{L} = {\rm Sp}_{2\ell}(k)$. Then we may view $\bar{V} = V \otimes k$ as an irreducible module for $\bar{L}$, with irreducible restriction to the connected component $\bar{H}^0 = {\rm SO}_{2\ell}(k)$. This allows us to proceed as in Case 3 in the proof of the previous proposition, using \cite[Proposition 2.5.22]{GL} to obtain bounds on $\dim C_V(x)$. Specifically, if $x \in H \setminus H_0$ is a $b_1$-type involution, then $x$ embeds in $\bar{L}$ as a long root element and thus \eqref{e:long} holds, while the bound in \eqref{e:short} is valid for all other elements $x \in H$ of prime order. 

As noted above, there are at most $2^{\ell-1}(2^{\ell}+1)$ distinct $b_1$-type involutions in $H$. Since $|H|<2^{\ell(2\ell-1)+1}$, we deduce that $H$ has a regular orbit on $V$ if
\[
2^d > 2^{\ell(2\ell-1)+1+d-4\ell^2+16\ell-14}+2^{\ell-1}(2^{\ell}+1) \cdot 2^{d-2\ell^2+5\ell-1}.
\]
It is routine to check that this inequality holds for all $\ell\geqs 9$.
\end{proof}

\vs

This completes the proof of Theorem \ref{t:main}.

\end{document}